\numberwithin{equation}{section}
\DeclareMathOperator\GL{GL}
\DeclareMathOperator\id{id}
\DeclareMathOperator\Hom{Hom}
\DeclareMathOperator\Endo{End}
\DeclareMathOperator\rk{rk}
\DeclareMathOperator\dete{det}
\DeclareMathOperator\cf{c.f.}
\DeclareMathOperator\gr{gr}
\DeclareMathOperator\Gr{Gr}
\DeclareMathOperator\im{im}
\DeclareMathOperator\coim{coim}
\DeclareMathOperator\coker{coker}
\DeclareMathOperator\pr{pr}
\DeclareMathOperator\Fil{Fil}
\DeclareMathOperator\Isoc{Isoc}
\DeclareMathOperator\Res{Res}
\DeclareMathOperator\rig{rig}
\DeclareMathOperator\ad{ad}
\DeclareMathOperator\an{an}
\DeclareMathOperator\Rig{Rig}
\DeclareMathOperator\Ad{Ad}
\DeclareMathOperator\lft{lft}
\renewcommand{\phi}{\varphi}
\newcommand{\Z}{\mathbb{Z}}
\newcommand{\Q}{\mathbb{Q}}
\newcommand{\R}{\mathbb{R}}
\newcommand{\Abb}{\mathbb{A}}
\newcommand{\Gbb}{\mathbb{G}}
\newcommand{\Pbb}{\mathbb{P}}
\newcommand{\Dcal}{\mathcal{D}}
\newcommand{\Ecal}{\mathcal{E}}
\newcommand{\Fcal}{\mathcal{F}}
\newcommand{\Gcal}{\mathcal{G}}
\newcommand{\Ocal}{\mathcal{O}}
\newcommand{\Dfrak}{\mathfrak{D}}
\newcommand{\afrak}{\mathfrak{a}}
\newcommand{\rfH}{\mathscr{H}}
\newtheorem{theo}{Theorem}[section]
\newtheorem{lem}[theo]{Lemma}
\newtheorem{prop}[theo]{Proposition}
\newtheorem{cor}[theo]{Corollary}
\theoremstyle{remark}
\newtheorem{rem}[theo]{Remark}
\theoremstyle{remark}
\newtheorem{expl}[theo]{Example}
\theoremstyle{definition}
\newtheorem{defn}[theo]{Definition}
\begin{document}
\title[Weakly admissible filtered $\phi$-modules and the adjoint quotient]{On families of weakly admissible filtered $\phi$-modules and the adjoint quotient of $\GL_d$}
\author[E. Hellmann]{Eugen Hellmann}
\begin{abstract}
We study the relation of the notion of weak admissibility in families of filtered $\phi$-modules, as considered in \cite{Hellmann}, with the adjoint quotient.
We show that the weakly admissible subset is an open subvariety in the fibers over the adjoint quotient. Further we determine the image of the weakly admissible set in the adjoint quotient generalizing earlier work of Breuil and Schneider. 
\end{abstract}
\maketitle

\section{Introduction}

Filtered $\phi$-modules appear in $p$-adic Hodge-theory as a category of linear algebra data describing \emph{crystalline} representations of the absolute Galois group of a local $p$-adic field. More precisely, there is an equivalence of categories between crystalline representations and \emph{weakly admissible} filtered $\phi$-modules, see \cite{ColmezFont}. Here weak admissibility is a semi-stability condition relating the slopes of the $\phi$-linear endomorphism $\Phi$ with the filtration. 

In our companion paper \cite{Hellmann} we define and study arithmetic families of filtered $\phi$-modules and crystalline representations. Our families are parameterized by rigid analytic spaces or adic spaces in the sense of Huber. We show that the condition of being weakly admissible is an open condition \cite[Theorem 1.1]{Hellmann} and that there is an open subset of the weakly admissible locus over which there exists a family of crystalline representations giving rise to the family of filtered $\phi$-modules \cite[Theorem 1.3]{Hellmann}.

In this paper we study the weakly admissible locus in more detail. In the setting of period domains in the sense of Rapoport and Zink \cite{RapoZink}, the weakly admissible locus is an admissible open subset of a flag variety.  Contrarily, the weakly admissible locus in our set up has an algebraic nature as soon as we fix the Frobenius $\Phi$, or even the conjugacy class of its semi-simplification.
Further we analyze the image of the weakly admissible locus in the adjoint quotient. The question whether there exists a weakly admissible filtration for a fixed conjugacy class of the semisimplification of the Frobenius already appears in work of Breuil and Schneider \cite{BreuilSchneider} on the $p$-adic Langlands correspondence. Unlike the characterization in \cite{BreuilSchneider}, our characterization of the set of automorphisms $\Phi$ for which there exists a weakly admissible filtration is purely in terms of the adjoint quotient of $\GL_d$.

Our main results are as follows:
Fix a finite extension $K$ of $\Q_p$ and write $K_0$ for the maximal unramified extension of $\Q_p$ inside $K$. Let $d>0$ be an integer and denote by $A\subset \GL_d$ the diagonal torus. For a dominant cocharacter 
\[\nu: \Gbb_{m,\bar\Q_p}\longrightarrow (\Res_{K/\Q_p}A)_{\bar\Q_p}\]
we write $\Gr_\nu$ for the partial flag variety of $\Res_{K/\Q_p}\GL_d$ parametrizing flags of "type $\nu$". This variety is defined over the reflex field $E$ of $\nu$. As in \cite[4.1]{Hellmann} we denote by 
\[\Dfrak_\nu= \big((\Res_{K_0/\Q_p}\GL_d)_E\times\Gr_\nu\big)/(\Res_{K_0/\Q_p}\GL_d)_E\]
the stack of filtered $\phi$-modules with filtration of "type $\nu$" on the category of adic spaces locally of finite type. Let $W$ denote the Weyl group of $\GL_d$. We will define a morphism
\[\alpha:\Dfrak_\nu\longrightarrow (A/W)^{\ad}\]
to the adification of the adjoint quotient $A/W$ and prove the following theorem.
\begin{theo}\label{maintheo1}
Let $x\in (A/W)^{\rm ad}$ and form the $2$-fiber product
\[\begin{xy}\xymatrix{
\alpha^{-1}(x)^{\rm wa} \ar[r]\ar[d] & \Dfrak_\nu^{\rm wa}\ar[d]\\
x \ar[r] & (A/W)^{\rm ad}.
}
\end{xy}\]
Then there exists a finite extension $F$ of $\Q_p$ inside $k(x)$ and an Artin stack in schemes $\mathfrak{A}$ over $F$ such that 
\[\alpha^{-1}(x)^{\rm wa}=\mathfrak{A}^{\rm ad}\otimes_F k(x).\]
The stack $\mathfrak{A}$ is the stack quotient of a quasi-projective $F$-variety.
\end{theo}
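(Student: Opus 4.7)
My plan is to exhibit $\alpha^{-1}(x)^{\rm wa}$ as the adification of a finite disjoint union of algebraic stack quotients, one for each twisted conjugacy class of Frobenii with adjoint-quotient invariant $x$, acting on the weakly admissible locus of $\Gr_\nu$.

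I would begin by unpacking $\alpha$ geometrically. Using the presentation $\Dfrak_\nu = [(L\times\Gr_\nu)/L]$ with $L=(\Res_{K_0/\Q_p}\GL_d)_E$, the morphism $\alpha$ arises as the adification of the algebraic map $L\times\Gr_\nu\to L\to A/W$, where the second arrow sends a $\varphi$-linear endomorphism (encoded as an element of $L$) to the conjugacy class of (the appropriate $K_0/\Q_p$-norm of) its semisimplification; this descends to $\Dfrak_\nu$ because the composite is invariant under $\sigma$-twisted conjugation. A point $x\in (A/W)^{\rm ad}$ then determines a characteristic polynomial $P_x\in k(x)[T]$, which in turn fixes a Newton polygon and a partition of $d$ encoding the multiplicities of eigenvalues. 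By Dieudonn\'e--Manin theory over an algebraic closure of $K_0$, the isomorphism classes of rank-$d$ $\varphi$-isocrystals realising these invariants form a finite set. I take $F\subset k(x)$ to be a finite extension of $\Q_p$ large enough to realise representatives $(V_i,\Phi_i)_{i\in I}$ of each such class together with their centralisers.

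Next I would construct $\mathfrak{A}$. For each $i\in I$, the $\sigma$-centraliser
\[Z_i := \{g\in L_F : g^{-1}\Phi_i\sigma(g)=\Phi_i\}\]
is a smooth affine $F$-group, essentially an inner form of a Levi subgroup of $\GL_d$. Let $X_i\subset (\Gr_\nu)_F$ be the set of filtrations of type $\nu$ making $(V_i,\Phi_i,F^\bullet)$ weakly admissible; by Theorem 1.1 of \cite{Hellmann}, $X_i$ is open in the projective $F$-variety $(\Gr_\nu)_F$, hence quasi-projective. Set
\[\mathfrak{A} := \bigsqcup_{i\in I}[X_i/Z_i],\]
an Artin stack over $F$ that is the stack quotient of the quasi-projective $F$-variety $\bigsqcup_i X_i$ by a smooth affine $F$-group scheme. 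Comparing moduli descriptions then identifies $\alpha^{-1}(x)^{\rm wa}$ with $\mathfrak{A}^{\rm ad}\otimes_F k(x)$: both functors classify families of weakly admissible filtered $\varphi$-modules with $\alpha$-invariant $x$, and any such family is \'etale-locally modelled on some $(V_i\otimes k(x),\Phi_i\otimes k(x),F^\bullet)$ with $F^\bullet\in X_i$ and automorphism group $Z_i\otimes k(x)$.

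The main obstacle is the combination of the finite enumeration and its descent: one must check that the finite set of twisted conjugacy classes of Frobenii with prescribed characteristic polynomial, which is visible over an algebraic closure of $K_0$, in fact descends to a finite extension $F\subset k(x)$ of $\Q_p$, a rationality question about a Kottwitz-type parameterising set. A related subtlety is to justify \'etale-local triviality of families of $\varphi$-isocrystals with fixed characteristic polynomial over general adic bases, which is what permits the reduction to a finite collection of models $(V_i,\Phi_i)$; this likely relies on a form of the Steinberg section for $L\to A/W$ together with openness of the Newton stratification on $L$.
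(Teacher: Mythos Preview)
Your proposal has a genuine gap at the step where you claim each $X_i\subset(\Gr_\nu)_F$ is quasi-projective. You invoke \cite[Theorem 1.1]{Hellmann} to conclude that $X_i$ is open, but that theorem only asserts openness in the \emph{adic} topology: the weakly admissible locus is an open adic subspace of $(\Gr_\nu)_F^{\rm ad}$, not a priori a Zariski-open subscheme of $(\Gr_\nu)_F$. The entire content of the theorem you are trying to prove is that this analytically open locus is in fact the adification of a scheme; the paper's own remark after the proof stresses exactly this point, contrasting the situation with Rapoport--Zink period domains, where the weakly admissible locus is genuinely only an analytic object. So your argument is circular at its core.

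The paper's proof supplies the missing idea. One introduces the incidence schemes $Z_i$ parametrising pairs $(g,U)$ with $U$ a $\Phi_g$-stable rank-$i$ submodule, together with the regular function $f_i=\det(\Phi_g^f|_U)$. The failure of weak admissibility for a rank-$i$ subobject is then the conjunction of a Zariski-closed condition on the filtration (the intersection with $U_K$ jumps) and the analytic inequality $|f_i|<p^{-f^2 m}$. The crucial observation is that over the fibre of a fixed $x\in(A/W)^{\rm ad}$, every value of $f_i$ is a product of roots of the characteristic polynomial attached to $x$, hence $f_i$ assumes only finitely many values and the inequality $|f_i|<p^{-f^2 m}$ cuts out a union of connected components. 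This converts the analytic condition into a Zariski one, and the proper pushforward to $G\times\Gr_{K,\nu}$ then exhibits the non-weakly-admissible locus as Zariski-closed. Passing from rigid points to arbitrary adic points is done by choosing a rigid point with matching valuation data. Your decomposition by $\sigma$-conjugacy class of $\Phi$ never enters; indeed, the stratification of $G$ by $\sigma$-conjugacy class is not locally closed in any evident algebraic sense over the fibre of $x$, so even setting aside the circularity, the disjoint-union description $\bigsqcup_i[X_i/Z_i]$ would require further justification.
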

Further we determine the image of the weakly admissible locus $\Dfrak_\nu^{\rm wa}$ under the morphism $\alpha$.
\begin{theo}\label{maintheo2}
Let $\nu$ be a dominant coweight as above. There is a dominant coweight $\mu(\nu)$ of $\GL_d$ associated to $\nu$ such that 
\[\alpha^{-1}(x)^{\rm wa}\neq\emptyset\Longleftrightarrow x\in(A/W)_{\leq\mu(\nu)}.\]
Here $(A/W)_{\leq\mu(\nu)}$ is a Newton-stratum in the sense of Kottwitz \cite{Kottwitz}.
\end{theo}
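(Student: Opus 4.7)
Since $\Dfrak_\nu^{\rm wa}$ is an open substack of $\Dfrak_\nu$ by \cite[Thm.~1.1]{Hellmann}, the nonemptiness of the fibre $\alpha^{-1}(x)^{\rm wa}$ is detected on classical points; I reduce to the case that $x$ has residue field a finite extension of $\Q_p$, and fix a semisimple representative $\Phi \in (\Res_{K_0/\Q_p}\GL_d)(\bar\Q_p)$ of $x$. Both directions then become statements about explicit filtered $\phi$-modules.

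For the direction $(\Rightarrow)$, let $(D,\Phi,\Fil^\bullet)$ be a weakly admissible filtered $\phi$-module mapping to $x$. The Newton polygon of the isocrystal $(D,\Phi)$ depends only on $[\Phi^{\rm ss}]=x$. After extending scalars to $\check{K}_0$, the Dieudonn\'e--Manin decomposition produces a $\Phi$-stable filtration by isoclinic steps, and applying the sub-part of weak admissibility to each step yields $t_H \leq t_N$ on it. Combined with the equality $t_H(D)=t_N(D)$ and the standard majorisation estimate that $t_H(D')$ is at least the sum of the smallest $\dim D'$ jumps of $\Fil^\bullet$, one obtains the polygon inequality: the Newton polygon lies on or above the Hodge polygon with common endpoint. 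In the dominance order of Kottwitz this reads exactly $x \leq \mu(\nu)$, so $x \in (A/W)_{\leq \mu(\nu)}$.

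For the converse $(\Leftarrow)$, I need to exhibit a filtration of type $\nu$ on $D_K = D \otimes_{K_0} K$ which, together with $\Phi$, is weakly admissible. This is the existence theorem of Breuil--Schneider \cite{BreuilSchneider}, rephrased in the group-theoretic language of the adjoint quotient. The construction proceeds by induction on the rank $d$: using the slope decomposition of $(D,\Phi)$ over $\check{K}_0$, I place the top jump $\Fil^h(D_K)$ of the Hodge filtration in sufficiently generic position so as to have trivial intersection with every proper $\Phi$-stable subspace on which a Hodge-mass comparison would otherwise produce a weak-admissibility violation. After descending from $\check K_0$ by Galois invariance one iterates on the quotient.

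The main obstacle is the sufficiency direction, and within it the ``general position'' argument in the presence of coincidences of Frobenius eigenvalues: inside each isotypic component the $\Phi$-stable subspaces form a positive-dimensional family (a product of smaller flag varieties), so the filtration must avoid a genuinely large incidence locus. The key point is that the parameter space $\Gr_\nu$ of filtrations of type $\nu$ must surject past every ``bad'' subvariety defined by equality of Hodge mass with a partial Newton sum, and the polygon hypothesis $\mu(\nu) \geq$ Newton is precisely what forces the relevant codimensions to be positive. It is exactly here that the characterisation as a Newton stratum in the adjoint quotient, rather than an ad hoc polygon inequality, emerges in its natural form.
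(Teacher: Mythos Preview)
Your $(\Rightarrow)$ direction is essentially correct and in fact cleaner than the paper's. The paper does not use the slope filtration here; instead it shows directly that $\tfrac{1}{f}\sum_{j\in I} v_p(\lambda_j)\geq l_{|I|}$ for \emph{every} subset $I$, by an iterated application of a convexity lemma (Lemma~\ref{technischeslem}) starting from the inequalities for genuine $\Phi$-stable subobjects. Your route---apply weak admissibility only to the isoclinic subobjects, combine with the trivial lower bound $t_H(D')\geq l_{\rk D'}$, and then use convexity of both polygons to pass from the Newton breakpoints to all $i$---reaches the same conclusion with less bookkeeping.

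Your $(\Leftarrow)$ direction, however, has a genuine gap, and it is exactly the obstacle you flag but do not resolve. You begin by fixing a \emph{semisimple} representative $\Phi$ of $x$. This can kill the argument outright: for such a $\Phi$ there may be no weakly admissible filtration of type $\nu$ at all, even though $x\in (A/W)_{\leq\mu(\nu)}$. Take $K=\Q_p$, $d=2$, $\nu=(2,0)$, and $x$ the image of $\Phi=\mathrm{diag}(p,p)$. Then $l_1=0$, $l_2=2$, and one checks $x\in (A/W)_{\leq\mu(\nu)}$. But every line is $\Phi$-stable with $t_N=1$, while the line $\Fcal^2$ has $t_H=2>1$; so no filtration of type $\nu$ on $(D,\Phi)$ is weakly admissible. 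Your proposed dimension count therefore cannot succeed: the ``bad'' locus really is all of $\Gr_\nu$.

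The paper's key idea is to choose instead a representative $\Phi$ consisting of a \emph{single maximal Jordan block} for each eigenvalue (so in the example above, $\Phi=\begin{psmallmatrix}p&1\\0&p\end{psmallmatrix}$). This forces the set of $\Phi$-stable subobjects to be \emph{finite}. The genericity argument is then immediate: for each of the finitely many $\Phi$-stable $D'$, the filtrations in general position with respect to $D'$ form a nonempty Zariski-open, and a finite intersection of nonempty opens in the irreducible variety $\Gr_{K,\nu}$ is nonempty. For a filtration in this intersection one has $t_H(D')=l_{\rk D'}$, and the hypothesis $x\in(A/W)_{\leq\mu(\nu)}$ gives $t_N(D')\geq l_{\rk D'}$, so weak admissibility holds. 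No induction on $d$ and no codimension estimate is needed. Since this non-semisimple $\Phi$ has the same image $x$ in $A/W$, it witnesses $\alpha^{-1}(x)^{\rm wa}\neq\emptyset$.
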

The coweight $\mu(\nu)$ which appears in the theorem is explicit and defined in Definition $\ref{defmunu}$

{\bf Acknowledgements}: I thank my advisor M. Rapoport for his interest and advice.
This work was supported by the SFB/TR 45 "Periods, Moduli Spaces and Arithmetic of Algebraic Varieties" of the DFG (German Research Foundation).

\section{Filtered $\phi$-modules}

Throughout this section we denote by $F$ a topological field containing $\Q_p$ with a continuous valuation $v_F:F\rightarrow \Gamma_F\cup\{0\}$ in the sense of \cite[2, Definition]{contval} that is $\Gamma_F$ is a totally ordered abelian group (written multiplicative) and
\begin{align*}
 v(0)&=0 \\
 v(1)&=1 \\
 v(ab)&=v(a)v(b)\\
 v(a+b)&\leq\max\{v(a),v(b)\},
\end{align*}
where the order on $\Gamma_F$ is extended to $\Gamma_F\cup\{0\}$ by $0<\gamma$ for all $\gamma\in\Gamma_F$.
We will introduce the notion of a filtered $\phi$-module with coefficients in $F$ and define weak admissibility for these objects.\\
Recall that $K_0$ is an unramified extension of $\Q_p$ with residue field $k$ and write $f=[K_0:\Q_p]$. We write $\phi$ for the lift of the absolute Frobenius to $K_0$. 
\subsection{$\phi$-modules with coefficients}
In this subsection define and study what we call \emph{isocrystals over $k$ with coefficients in $F$}.
\begin{defn}\label{defisoc}
An \emph{isocrystal over $k$ with coefficients in $F$} is a free $F\otimes_{\Q_p}K_0$-module $D$ of finite rank together with an automorphism $\Phi:D\rightarrow D$ that is semi-linear with respect to $\id\otimes\phi:F\otimes_{\Q_p}K_0\rightarrow F\otimes_{\Q_p}K_0$.\\
A morphism $f:(D,\Phi)\rightarrow (D',\Phi')$ is an $F\otimes_{\Q_p}K_0$-linear map $f:D\rightarrow D'$ such that
\[f\circ \Phi= \Phi'\circ f.\]
The category of isocrystals over $k$ with coefficients in $F$ is denoted by $\Isoc(k)_F$.
\end{defn}
It is easy to see that $\Isoc(k)_F$ is an $F$-linear abelian $\otimes$-category with the obvious notions of direct sums and tensor products.
\begin{rem}
\noindent (i) Given an $F\otimes_{\Q_p}K_0$-module $D$ of finite type, the existence of a semi-linear automorphism $\Phi:D\rightarrow D$ implies that $D$ is free over $F\otimes_{\Q_p}K_0$. This fact will be used in the sequel.\\
\noindent (ii) In the classical setting an isocrystal over $k$ is a finite-dimensional $K_0$-vector space with $\phi$-linear automorphism $\Phi$, i.e. an object in $\Isoc(k)_{\Q_p}$.

If $F$ is finite over $\Q_p$, then an isocrystal over $k$ with coefficients in $F$ is the same as an object $(D,\Phi)\in\Isoc(k)_{\Q_p}$ together with a map
\[
 F\longrightarrow \Endo_\Phi(D),
\]
where the subscript $\Phi$ on the right hand side indicates that the endomorphisms commute with $\Phi$ (compare \cite[VIII, 5]{DatOrlikRapo} for example).
This is clearly equivalent to our definition.
\end{rem}
Let $F'$ be an extension of $F$ with valuation $v_{F'}:F'\rightarrow \Gamma_{F'}\cup\{0\}$ extending the valuation $v_F$. The \emph{extension of scalars} from $F$ to $F'$ is the functor
\begin{equation}\label{extension1}
-\otimes_FF':\Isoc(k)_F\longrightarrow \Isoc(k)_{F'}
\end{equation} 
that maps $(D,\Phi)\in\Isoc(k)_F$ to the object $(D\otimes_{F}F',\Phi\otimes\id)$.

If $F'$ is a finite extension of $F$, then we also define the \emph{restriction of scalars}
\begin{equation}\label{restriction1}
\epsilon_{F'/F}:\Isoc(k)_{F'}\longrightarrow \Isoc(k)_F.
\end{equation}
This functor maps $(D',\Phi')\in\Isoc(k)_{F'}$ to itself, forgetting the $F'$-action but keeping the $F$-action.

We write $\Gamma_F\otimes\Q$ for the localisation of the abelian group $\Gamma_F$. Then every element $\gamma'\in\Gamma_F\otimes\Q$ can be written as a single tensor $\gamma\otimes r$ and we extend the total order of $\Gamma_F$ to $\Gamma_F\otimes\Q$ by 
\[a\otimes\tfrac{1}{m}<b\otimes\tfrac{1}{n}\Leftrightarrow a^n<b^m.\]

\begin{defn}
Let $(D,\Phi)\in\Isoc(k)_F$ and $d=\rk_{F\otimes_{\Q_p}K_0}D$. 
The map $\Phi^f:D\rightarrow D$ is an $F$-linear automorphism of the $fd$-dimensional $F$-vector space $D$.\\
(i) Define the \emph{Newton slope} of $(D,\Phi)$ as
\[
 \lambda_N^{(F)}(D,\Phi)=v_F(\dete_F \Phi^f)\otimes \tfrac{1}{f^2d}\in\Gamma_F\otimes\Q.
\]
Here $\dete_F$ means that we take the determinant of an $F$-linear map on an $F$-vector space.\\
(ii) Let $\lambda\in\Gamma_F\otimes\Q$. An object $(D,\Phi)\in\Isoc(k)_F$ is called \emph{purely of Newton-slope $\lambda$} if for all $\Phi$-stable $F\otimes_{\Q_p}K_0$-submodules $D'\subset D$ we have $\lambda_N^{(F)}(D',\Phi|_{D'})=\lambda$. 
\end{defn}
\begin{lem}\label{extensionandslop1}
Let $F'$ be an extension of $F$ with valuation $v_{F'}$ extending $v_F$ and $(D,\Phi)\in\Isoc(k)_F$. Then
\[\lambda_N^{(F')}(D\otimes_FF',\Phi\otimes\id)= \lambda_N^{(F)}(D,\Phi).\]
If in addition $F'$ is finite over $F$ and $(D',\Phi')\in\Isoc(k)_{F'}$, then
\[\lambda_N^{(F)}(\epsilon_{F'/F}(D',\Phi'))=\lambda_N^{(F')}(D',\Phi').\]  
\end{lem}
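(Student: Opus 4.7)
The plan is to unwind the definition of the Newton slope directly in both cases. I expect no serious obstacle: the first statement is rank-and-matrix bookkeeping, and the second reduces to the standard identity relating the $F$-linear determinant of an $F'$-linear operator to the norm of its $F'$-linear determinant.

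For the extension statement, I would fix an $F$-basis $e_1,\dots,e_{fd}$ of $D$, so that $e_1\otimes 1,\dots,e_{fd}\otimes 1$ is an $F'$-basis of $D\otimes_F F'$. Since $\phi^f=\id$ on $K_0$, the map $\Phi^f$ is $F\otimes_{\Q_p}K_0$-linear, in particular its matrix in $(e_i)$ has entries in $F$; this same matrix represents $\Phi^f\otimes\id$ in the base-changed basis, hence
\[\dete_{F'}(\Phi^f\otimes\id)=\dete_F(\Phi^f)\in F\subseteq F'.\]
Combined with $\rk_{F'\otimes_{\Q_p}K_0}(D\otimes_FF')=\rk_{F\otimes_{\Q_p}K_0}D$ and $v_{F'}|_F=v_F$, the two Newton slopes coincide in $\Gamma_F\otimes\Q\subseteq\Gamma_{F'}\otimes\Q$.

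For the restriction statement, I would first verify ranks: the natural isomorphism $F'\otimes_{\Q_p}K_0\cong F'\otimes_F(F\otimes_{\Q_p}K_0)$ shows that $F'\otimes_{\Q_p}K_0$ is free of rank $[F':F]$ over $F\otimes_{\Q_p}K_0$, so $d:=\rk_{F\otimes_{\Q_p}K_0}\epsilon_{F'/F}(D')$ equals $[F':F]\cdot d'$ with $d'=\rk_{F'\otimes_{\Q_p}K_0}D'$. Since $\Phi'^f$ is $F'$-linear on the $F'$-vector space underlying $D'$, the standard determinant/norm identity gives $\dete_F(\Phi'^f)=N_{F'/F}(\dete_{F'}(\Phi'^f))$. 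Applying $v_F$ and using $v_F\circ N_{F'/F}=[F':F]\cdot v_{F'}$ on the finite extension $F'/F$, the factor $[F':F]$ cancels the ratio of ranks:
\[v_F(\dete_F\Phi'^f)\otimes\tfrac{1}{f^2d}=[F':F]\cdot v_{F'}(\dete_{F'}\Phi'^f)\otimes\tfrac{1}{f^2[F':F]d'}=v_{F'}(\dete_{F'}\Phi'^f)\otimes\tfrac{1}{f^2d'}.\]
The only point that deserves care is the norm/valuation compatibility, which is automatic when $v_{F'}$ is the canonical (unique) extension of $v_F$ to $F'$; this is the implicit framework here, since the fields $F$ arise as residue fields of points of adic spaces, where valuations extend uniquely along finite extensions.
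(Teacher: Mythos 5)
Your proof is correct and is precisely the ``straightforward computation'' the paper has in mind; the paper gives no further details of its own. The extension half (base-change an $F$-basis, note the $F$-matrix of the $F$-linear map $\Phi^f$ is unchanged, rank preserved) and the restriction half ($\dete_F=N_{F'/F}\circ\dete_{F'}$ together with the rank multiplying by $[F':F]$) are exactly the right bookkeeping. You are also right to single out the one ingredient with real content: the restriction half rests on $v_F(N_{F'/F}(a))=v_{F'}(a)^{[F':F]}$ (written multiplicatively), which is \emph{not} automatic for an arbitrary valued extension $F'/F$ --- it can fail when $v_F$ has several inequivalent extensions to $F'$ --- but does hold whenever $v_{F'}$ is the unique (equivalently, Galois-invariant for a normal closure) extension of $v_F$. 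This is an implicit hypothesis that the lemma's statement never makes explicit; it is satisfied in every place the paper actually invokes restriction of scalars (the unramified extension $FK_0/F$ in Proposition~$\ref{slopedecomp}$ and the Galois/finite extensions in Corollary~$\ref{waextstab}$) and in the intended setting where $F$ is a residue field at a point of an adic space. So your closing remark is a genuine gain in precision over the paper's one-line proof rather than a gap in yours.
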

\begin{proof}
These are straightforward computations.
\end{proof}
As the Newton slope is preserved under extension and restriction of scalars we will just write $\lambda_N$ in the sequel.
\begin{rem}\label{classicalslope}
Let $(D,\Phi)\in\Isoc(k)_{\Q_p}$ be an object of rank $d$ and denote for the moment by $v_p$ the usual $p$-adic valuation on $\Q_p$. Write $|-|=p^{-v_p(-)}$.
Then the value group of the absolute value is $\Gamma_{\Q_p}=p^\Z$ and we identify $\Gamma_{\Q_p}\otimes\Q$ with the subgroup $p^\Q$ of $\R\backslash\{0\}$.
Our definitions then imply
\begin{equation}\label{slopeclassical}
\lambda_N(D,\Phi)=p^{-\tfrac{1}{d} v_p(\dete_{K_0}\Phi)}.
\end{equation}
Here $v_p(\dete_{K_0}\Phi)$ is the $p$-adic valuation of the determinant over $K_0$ of any matrix representing the semi-linear map $\Phi$ in some chosen basis.
This matrix is well defined up to $\phi$-conjugation and hence the valuation of the determinant is independent of choices.
Note that (the negative of) the exponent in $(\ref{slopeclassical})$ is the usual Newton slope of the isocrystal $(D,\Phi)$ over $k$, compare \cite{Zink} for example.
\end{rem}
\begin{prop}\label{slopedecomp}
Let $(D,\Phi)\in\Isoc(k)_F$, then there exist unique elements $\lambda_1<\lambda_2<\dots<\lambda_r\in\Gamma_F\otimes\Q$ and a unique decomposition
\[D=D_1\oplus D_2\oplus\dots\oplus D_r\]
of $D$ into $\Phi$-stable $F\otimes_{\Q_p}K_0$-submodules such that $(D_i,\Phi|_{D_i})$ is purely of Newton slope $\lambda_i$.
\end{prop}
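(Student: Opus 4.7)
The plan is to read off the decomposition from the characteristic polynomial of $\Phi^f$. The crucial observation is that since $\phi^f=\id$ on $K_0$, the iterate $\Phi^f$ is $F\otimes_{\Q_p}K_0$-linear, and in particular $F$-linear on the $fd$-dimensional $F$-vector space underlying $D$. I would form $Q(T)=\dete_F(T-\Phi^f)\in F[T]$ and apply the Newton polygon slope factorization with respect to $v_F$ to write $Q=\prod_{i=1}^{r}Q_i$, where the roots of each $Q_i$ all share a common valuation $\beta_i\in\Gamma_F\otimes\Q$ and the $\beta_i$ are pairwise distinct. Since the $Q_i$ are pairwise coprime in $F[T]$, standard linear algebra yields an $F$-linear decomposition $D=\bigoplus_iD_i$ with $D_i=\ker Q_i(\Phi^f)$.

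Two commutation checks then handle the structural compatibility: $D_i$ is stable under $F\otimes_{\Q_p}K_0$ because the latter commutes with $\Phi^f$ (which is $F\otimes K_0$-linear), and $D_i$ is stable under $\Phi$ because $\Phi$ commutes with $\Phi^f$ and hence with any polynomial in $\Phi^f$. I would then verify that $(D_i,\Phi|_{D_i})$ is purely of a single slope $\lambda_i\in\Gamma_F\otimes\Q$ by computing $\lambda_N^{(F)}(D_i,\Phi)$ from $v_F(\dete_F\Phi^f|_{D_i})=v_F(\pm Q_i(0))$ via the normalization in the definition, and then observing that any $\Phi$-stable $F\otimes K_0$-submodule $D'\subset D_i$ has its $F$-characteristic polynomial of $\Phi^f|_{D'}$ dividing $Q_i$, so all its roots have valuation $\beta_i$, forcing $\lambda_N^{(F)}(D',\Phi)=\lambda_i$.

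Uniqueness is then formal. If $D=\bigoplus_jE_j$ is another decomposition with $E_j$ purely of slope $\mu_j$, then the characteristic polynomial of $\Phi^f|_{E_j}$ can have only roots of valuation corresponding to $\mu_j$, so $E_j\subseteq\bigoplus_{\lambda_i=\mu_j}D_i$, and dimension-counting forces equality as well as $\{\mu_j\}=\{\lambda_i\}$.

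The main technical point is the Newton polygon factorization of $Q$ for a general valued field $F$. If $F$ is Henselian (in particular if it is complete, as is the case for the residue fields at points of adic spaces relevant to the paper), Hensel's lemma gives the factorization directly. In the general case one passes to the Henselization $(F^h,v_F^h)$, factors there, and descends to $F[T]$ by invoking the uniqueness (equivalently, Galois invariance) of the slope factors. A secondary, purely bookkeeping, issue is matching the combinatorial exponent $\beta_i$ of $Q_i$ with the precise normalization $1/(f^2d)$ appearing in the definition of $\lambda_N^{(F)}$, which is resolved by computing on a single pure piece.
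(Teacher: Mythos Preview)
Your route via the Newton-polygon slope factorization of $Q(T)=\det_F(T-\Phi^f)$ differs from the paper's. The paper first arranges $K_0\hookrightarrow F$ (if necessary by a finite Galois base change to $FK_0$, on which the valuation extends canonically so that Galois descent applies), reduces to a single $F$-linear automorphism $\Phi^f$ on one factor $V_{\psi_0}$, and then takes directly the maximal $\Phi^f$-stable subspaces that are pure of each slope; uniqueness is handled separately via the lemma that morphisms respect slope pieces. For Henselian $F$ your argument is correct and arguably cleaner, since it avoids the case distinction on whether $K_0$ embeds in $F$.

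In the stated generality, however, there is a genuine gap. The factorization $Q=\prod_iQ_i$ with each $Q_i$ having all roots of a common valuation need not exist over $F$, and your descent from $F^h$ cannot work: the absolute Galois group of $F$ does not in general preserve a chosen extension of $v_F$, so it may swap roots of different valuations, and an irreducible factor of $Q$ over $F$ can then have roots of several valuations. Concretely, take $K_0=\Q_p$ (so $f=1$), $F=\Q_p(t)$ with the Gauss valuation, and $\Phi$ on $D=F^2$ with characteristic polynomial $Q(T)=T^2-T-pt$. The Newton polygon of $Q$ has two distinct slopes $0$ and $1$, yet $Q$ is irreducible over $F$ since $1+4pt$ is not a square in $\Q_p(t)$; over $F^h$ it splits into two linear factors that visibly do not descend. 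Here $(D,\Phi)$ has no nontrivial $\Phi$-stable subobject and is therefore pure of a single slope, so the proposition holds with $r=1$, whereas your construction would predict $r=2$. The same example invalidates your uniqueness step, since $D$ is pure but the roots of its characteristic polynomial do not share a valuation. A correct variant of your idea: factor $Q$ into irreducibles over $F$ and group the $F[\Phi^f]$-primary components of $D$ by the value $v_F(p(0))\otimes\tfrac{1}{\deg p}$ attached to each irreducible factor $p$; this is defined over arbitrary $F$ and reproduces the paper's decomposition without any case split.
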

\begin{proof}
First we show the existence of such a decomposition. The uniqueness will then follow from Lemma $\ref{lemabbNewton}$ below.\\
\emph{Step 1:} Assume first that there exists an embedding $\psi_0:K_0\hookrightarrow F$. \\
We obtain an isomorphism
\[\begin{xy}
   \xymatrix{
F\otimes_{\Q_p}K_0 \ar[r]^{\hspace{-0.2cm}\cong} & \prod_{\psi:K_0\rightarrow F}F.
}
  \end{xy}
\]
The endomorphism $\id\otimes\phi$ on the left hand side translates to the shift of the factors on the right hand side. Further we obtain the corresponding decomposition 
\[
 D=\prod_{\psi}V_\psi
\]
into $F$-vector spaces $V_\psi$ and $F$-linear isomorphisms 
\[\begin{xy}\xymatrix{
 \Phi_\psi=\Phi|_{V_\psi}:V_\psi\ar[r]^{\hspace{0.5cm}\cong} & V_{\psi\circ\phi}.
}\end{xy}
\]
There is a bijection between the $\Phi$-stable subspaces $D'$ of $D$ and the $\Phi^f|_{V_{\psi_0}}$-stable subspaces of $V_{\psi_0}$ given by $D'\mapsto D'\cap V_{\psi_0}$. \\
Given $D'\subset D$ and $U=D'\cap V_{\psi_0}$ we have
\[\lambda_N(D',\Phi|_{D'})=v_F(\dete_F\Phi^f|_U)\otimes\tfrac{1}{f\dim_FU}\in\Gamma_F\otimes\Q.\]
Hence the desired decomposition of $D$ is induced by the decomposition of $V_{\psi_0}$ into the maximal $\Phi^f|_{V_{\psi_0}}$-stable subspaces $U\subset V_{\psi_0}$ such that
\[v_F(\dete_F\Phi^f|_{U'})\otimes\tfrac{1}{f\dim_FU'}=\lambda_i\]
for all $\Phi^f$-stable subspaces $U'\subset U$.\\
\emph{Step 2:} If there is no embedding $\psi$ of $K_0$ into $F$, then we find a finite extension $F'=FK_0$ of $F$ such that $K_0$ embeds into $F'$. We want to deduce the result from Step 1 by Galois descent. We replace $F'$ by its Galois hull and extend the valuation from $F$ to $F'$ by setting $v_F(\Ocal_{K_0}^\times)=\{1\}$, where $\Ocal_{K_0}\subset K_0 $ is the ring of integers.\\
Write $(D',\Phi')$ for the extension of scalars of $(D,\Phi)\in\Isoc(k)_F$ to $\Isoc(k)_{F'}$. Then there exists $\lambda_1>\lambda_2>\dots>\lambda_r\in\Gamma_F\otimes\Q$ and a decomposition 
\begin{equation}\label{Dstrichdecomp}
 D'=D'_1\oplus D'_2\oplus\dots D'_r
\end{equation}
such that the $D'_i$ are $\Phi'$-stable and $(D'_i,\Phi'|_{D'_i})$ is purely of slope $\lambda_i$.
Now the action of the Galois group ${\rm Gal}(F'/F)$ preserves the valuation on $F'$ and hence also the Newton slope of a $\Phi'$-stable subobject of $D'$. It follows that ${\rm Gal}(F'/F)$ preserves the decomposition $(\ref{Dstrichdecomp})$ and hence this decomposition descends to $D$.
\end{proof}
\begin{rem}
Proposition $\ref{slopedecomp}$ replaces the slope decomposition in the classical context ($\cf.$ \cite[VI, 3]{Zink} for example).
\end{rem}
\begin{defn}
 Let $(D,\Phi)\in \Isoc(k)_F$ and denote by $D=\bigoplus D_i$ a decomposition of $D$ into $\Phi$-stable submodules purely of slope $\lambda_i\in\Gamma_F\otimes\Q$ as in Proposition $\ref{slopedecomp}$. We will refer to this as the \emph{slope decomposition}. Further, for $\lambda\in\Gamma_F\otimes\Q$ we define
\[
D_\lambda=
 \begin{cases}
  D_{i} &,\ \lambda=\lambda_i\\
  0 & \text{otherwise}.
 \end{cases}
\]
\end{defn}
\begin{lem}\label{lemabbNewton}
 Let $f:(D,\Phi)\rightarrow (D',\Phi')$ be a morphism in $\Isoc(k)_F$. Consider slope decompositions $D=\bigoplus D_i$ and $D'=\bigoplus D'_j$ as in Proposition $\ref{slopedecomp}$. Then for all $\lambda\in\Gamma_F\otimes\Q$
\[
 f(D_\lambda)\subset D'_\lambda.
\]
\end{lem}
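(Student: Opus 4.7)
The plan is to reduce the lemma to the statement that there are no nonzero morphisms in $\Isoc(k)_F$ between two objects that are purely of distinct Newton slopes. Granting this, I use the slope decompositions $D=\bigoplus D_i$ and $D'=\bigoplus D'_j$, which already live in $\Isoc(k)_F$, and express $f$ via its components $f_{ij} = \pi'_j \circ f \circ \iota_i \colon D_i \to D'_j$ relative to the canonical projections $\pi'_j$ and inclusions $\iota_i$. The inclusion $f(D_\lambda) \subseteq D'_\lambda$ is then equivalent to the vanishing of $f_{ij}$ whenever $\lambda_i \ne \lambda_j$.

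So suppose $g \colon D \to D'$ is nonzero with $D$ purely of slope $\lambda$ and $D'$ purely of slope $\mu$, $\lambda \ne \mu$. I would analyse the short exact sequence
\[0 \longrightarrow \ker g \longrightarrow D \longrightarrow \im g \longrightarrow 0.\]
Both $\ker g \subseteq D$ and $\im g \subseteq D'$ are $F \otimes_{\Q_p} K_0$-submodules stable under $\Phi$, resp.\ $\Phi'$, and are finitely generated over the Noetherian ring $F \otimes_{\Q_p} K_0$. Invertibility of $\Phi$ and $\Phi'$ ensures that $\Phi$ and $\Phi'$ restrict to $\emph{automorphisms}$ (and not merely endomorphisms) of $\ker g$ and $\im g$. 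By Remark (i) these modules are then free, so they inherit the structure of objects in $\Isoc(k)_F$ and have well-defined Newton slopes, which by the "purely of slope" hypotheses are $\lambda_N(\ker g) = \lambda$ and $\lambda_N(\im g) = \mu$.

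The next step is the additivity of $\lambda_N$ in a short exact sequence. Choosing an $F \otimes_{\Q_p} K_0$-basis of $D$ adapted to $\ker g \subseteq D$, the matrix of $\Phi^f$ becomes block upper triangular with diagonal blocks representing $\Phi^f|_{\ker g}$ and the induced $\Phi^f$ on $\im g$; multiplicativity of the $F$-determinant, together with the equality $\rk D = \rk \ker g + \rk \im g$, yields the relation
\[\lambda_N(D)^{\rk D} = \lambda_N(\ker g)^{\rk \ker g}\cdot \lambda_N(\im g)^{\rk \im g}\]
in $\Gamma_F \otimes \Q$. Substituting $\lambda_N(D) = \lambda_N(\ker g) = \lambda$ leaves $\lambda^{\rk \im g} = \lambda_N(\im g)^{\rk \im g}$, and since $\Gamma_F \otimes \Q$ is a $\Q$-vector space, hence torsion-free, this forces $\lambda_N(\im g) = \lambda$, contradicting $\lambda \ne \mu$ as $\im g \ne 0$.

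The only point requiring real care is the verification that $\ker g$ and $\im g$ lie in $\Isoc(k)_F$ — i.e.\ that they are free over $F \otimes_{\Q_p} K_0$ with $\Phi$, $\Phi'$ restricting to automorphisms — so that their slopes are defined and additivity is applicable. This is exactly where Remark (i) is indispensable; everything else is a routine determinant computation.
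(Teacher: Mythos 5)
Your argument is correct, and it supplies details that the paper compresses into the single sentence ``this is an immediate consequence of $f\circ\Phi=\Phi'\circ f$.'' You reduce to the Hom-vanishing statement between objects purely of distinct Newton slopes, and you prove that statement by a Harder--Narasimhan--style computation: additivity of the (suitably normalized) Newton slope in a short exact sequence, together with purity on both sides, forces any nonzero component $f_{ij}$ with $\lambda_i\neq\lambda_j$ to produce a contradiction in the torsion-free group $\Gamma_F\otimes\Q$. The one point you flag as needing care — that $\ker g$ and $\im g$ are free with restricted/induced semilinear automorphisms, so that they genuinely live in $\Isoc(k)_F$ and the adapted-basis determinant splitting applies — you handle correctly: injectivity of $\Phi$ plus finite $F$-dimension of the submodule forces $\Phi$ to restrict (respectively descend) to an automorphism, and then Remark (i) gives freeness, hence the $F\otimes_{\Q_p}K_0$-split exact sequence needed for multiplicativity of $\dete_F$. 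The paper's ``immediate'' remark most plausibly has in mind the explicit description from Step 1 of Proposition~\ref{slopedecomp}: after extending scalars so that $K_0$ embeds in $F$, a morphism commuting with $\Phi$ commutes with $\Phi^f|_{V_{\psi_0}}$ and therefore respects generalized eigenspaces grouped by valuation, which \emph{is} the slope decomposition. Your route is more intrinsic — it never reopens the proof of Proposition~\ref{slopedecomp} and would apply verbatim in any setting where one has a decomposition into pure pieces and slope additivity — at the cost of being longer than the eigenvalue observation. Both are sound, and neither is circular with the uniqueness claim in Proposition~\ref{slopedecomp}, since you only use the existence and purity parts.
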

\begin{proof}
 This is an immediate consequence of $f\circ \Phi=\Phi'\circ f$.
\end{proof}

\subsection{Filtered Isocrystals with coefficients}
Recall the $K$ is a totally ramified extension of $K_0$. We denote by $e=[K:K_0]$ the ramification index of $K$.
In this section we define the basic object of our study.
\begin{defn}\label{deffilisoc}
 A \emph{$K$-filtered isocrystal over $k$ with coefficients in $F$} is a triple $(D,\Phi,\Fcal^\bullet)$, where $(D,\Phi)\in\Isoc(k)_F$ and $\Fcal^\bullet$ is a descending, separated and exhaustive $\Z$-filtration on $D_K=D\otimes_{K_0}K$ by (not necessarily free) $F\otimes_{\Q_p}K$-submodules. \\
A morphism 
\[f:(D,\Phi,\Fcal^\bullet)\longrightarrow (D',\Phi',\Fcal'^\bullet)\] 
is a morphism $f:(D,\Phi)\rightarrow (D',\Phi')$ in $\Isoc(k)_F$ such that $f\otimes\id:D_K\rightarrow D'_K$ respects the filtrations.\\
The category of $K$-filtered isocrystals over $k$ with coefficients in $F$ is denoted by $\Fil\Isoc(k)_F^K$.
\end{defn}
It is easy to see that $\Fil\Isoc(k)_F^K$ is an $F$-linear $\otimes$-category. Further it has obvious notions of kernels, cokernels and exact sequences. For an extension $F'$ of $F$ we again have an \emph{extension of scalars} like in $(\ref{extension1})$,
\[-\otimes_FF':\Fil\Isoc(k)_F^K\longrightarrow \Fil\Isoc(k)_{F'}^K.\]
If $F'$ is finite over $F$, we also have a \emph{restriction of scalars} like in $(\ref{restriction1})$,
\[\epsilon_{F'/F}:\Fil\Isoc(k)_{F'}^K\longrightarrow \Fil\Isoc(k)_F^K.\]
In the following we will often shorten our notation and just write $D$ for an object $(D,\Phi,\Fcal^\bullet)\in\Fil\Isoc(k)_F^K$.

We now want to develop a slope theory for filtered isocrystals and define weakly admissible objects. 
\begin{defn}
Let $(D,\Phi,\Fcal^\bullet)\in\Fil\Isoc(k)_F^K$. We define  
\begin{align*}
\deg\Fcal^\bullet&=\sum_{i\in\Z}\tfrac{1}{ef}i\dim_F\gr_i\Fcal^\bullet\\
\deg_F(D)&=(v_F(\dete_F\Phi^f)\otimes\tfrac{1}{f^2})^{-1}\ v_F(p)^{\deg(\Fcal^\bullet)}\in\Gamma_F\otimes\Q\\
\mu_F(D)&=\deg_F(D)(1\otimes\tfrac{1}{d})\in\Gamma_F\otimes\Q.
\end{align*}
We call $\mu_F(D)$ the \emph{slope} of $D$.
\end{defn}
\begin{rem}
As in Lemma $\ref{extensionandslop1}$, one easily sees that the slope $\mu_F$ is preserved under extension and restriction of scalars. Hence we will just write $\mu$ in the sequel.
\end{rem}
Now we have a Harder-Narasimhan formalism as in \cite[Chapter 1]{DatOrlikRapo}. The only difference is that our valuations are written multiplicatively, while in the usual theory they are written additively. We will only sketch the proofs and refer to \cite{DatOrlikRapo} for the details.
\begin{lem}\label{lemseseq}
Let
\[
\begin{xy}
\xymatrix{
0 \ar[r] & D'\ar[r] &D\ar[r] &D''\ar[r] &0
}
\end{xy}
\]
be a short exact sequence in $\Fil\Isoc(k)_F^K$. Then
\[\deg_F(D)=\deg_F(D')\deg_F(D'').\]
Further
\[
\max\{\mu(D'),\mu(D'')\}\geq \mu(D)\geq \min\{\mu(D'),\mu(D'')\}.
\]
The sequence $\mu(D'),\mu(D),\mu(D'')$ is either strictly increasing or strictly decreasing or stationary.
\end{lem}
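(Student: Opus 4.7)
The strategy is to prove the degree formula first and then deduce the slope statements by the standard convex-combination argument. The degree $\deg_F(D)$ is the product of two contributions: a filtration part built from the graded pieces of $\Fcal^\bullet$, and a Newton part built from $v_F(\dete_F \Phi^f)$. I will treat these separately.

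For the filtration part, the short exact sequence $0\to D'\to D\to D''\to 0$ in $\Fil\Isoc(k)_F^K$ induces, after $\otimes_{K_0}K$, a short exact sequence of $F\otimes_{\Q_p}K$-modules where (by convention on short exact sequences in this category) $D'_K$ carries the subspace filtration and $D''_K$ the quotient filtration. Each graded piece then sits in a short exact sequence of $F$-vector spaces
\[0\longrightarrow \gr_i \Fcal'^\bullet\longrightarrow\gr_i\Fcal^\bullet\longrightarrow\gr_i\Fcal''^\bullet\longrightarrow 0,\]
so $\dim_F\gr_i\Fcal^\bullet=\dim_F\gr_i\Fcal'^\bullet+\dim_F\gr_i\Fcal''^\bullet$. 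Summing against the weights $i/(ef)$ gives additivity of $\deg\Fcal^\bullet\in\Q$, and hence multiplicativity of $v_F(p)^{\deg\Fcal^\bullet}$ in $\Gamma_F\otimes\Q$. For the Newton part, the underlying $F\otimes_{\Q_p}K_0$-modules are free (Remark 2.2(i)), so choosing an $F\otimes_{\Q_p}K_0$-basis of $D'$ and extending to $D$ makes the $F$-linear endomorphism $\Phi^f$ block upper triangular with diagonal blocks $(\Phi')^f$ and $(\Phi'')^f$; consequently $\dete_F\Phi^f=\dete_F(\Phi')^f\cdot \dete_F(\Phi'')^f$ in $F$, and applying $v_F$ yields the multiplicativity of the first factor of $\deg_F$. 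Combining the two contributions gives $\deg_F(D)=\deg_F(D')\cdot\deg_F(D'')$.

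For the slope inequalities, rank is additive, $d=d'+d''$, so writing the value group additively we obtain $d\,\mu(D)=d'\mu(D')+d''\mu(D'')$, that is
\[\mu(D)=\frac{d'}{d'+d''}\mu(D')+\frac{d''}{d'+d''}\mu(D''),\]
which is a convex combination in the totally ordered $\Q$-vector space $\Gamma_F\otimes\Q$. Such a combination lies between its two arguments, which is the middle assertion, and it lies strictly between them unless $\mu(D')=\mu(D'')$, which yields the trichotomy.

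The only delicate point—more a matter of care than of effort—is the exactness of the sequence of graded pieces, which relies on the implicit convention that a short exact sequence in $\Fil\Isoc(k)_F^K$ carries the induced filtration on $D'_K$ and the quotient filtration on $D''_K$. Once this is granted, everything reduces to elementary manipulation of $F$-dimensions and $F$-determinants, so the argument runs parallel to the classical Harder-Narasimhan formalism of \cite[Chapter 1]{DatOrlikRapo}, the only modification being the passage from additive to multiplicative valuations.
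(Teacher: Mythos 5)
Your proof is correct, and it fills in exactly the details that the paper dismisses with ``obvious from the definitions'': multiplicativity of the filtration factor via strictness of the graded sequences, multiplicativity of the Newton factor via the block-triangular form of $\Phi^f$, and the convex-combination (weighted geometric mean) argument for the slope inequalities. The one point you flag -- that the subobject carries the induced filtration and the quotient the image filtration, so the graded sequence is exact -- is indeed the implicit convention behind ``obvious notions of kernels, cokernels and exact sequences'' in the paper, and your treatment of it is correct.
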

\begin{proof}
The first assertion is obvious from the definitions and the second is a direct consequence.
\end{proof}
\begin{lem}\label{lemcoimim}
Let $f:D\rightarrow D'$ be a morphism in $\Fil\Isoc(k)_F^K$. Then
\[\deg_F({\rm coim}\,f)\geq \deg_F({\rm im}\,f).\]
\end{lem}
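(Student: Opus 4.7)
The plan is to reduce to a comparison of Hodge filtrations and then apply a routine monotonicity of degree.

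First I would observe that, since $\Isoc(k)_F$ is abelian, the canonical map $\coim f \to \im f$ is an isomorphism of isocrystals; after identifying the two via this isomorphism and calling the common underlying isocrystal $(M,\Phi_M)$, the Newton factor $(v_F(\dete_F\Phi_M^f)\otimes\tfrac{1}{f^2})^{-1}$ entering $\deg_F$ is the same for $\coim f$ and $\im f$. Moreover $v_F(p)\leq 1$ in $\Gamma_F$ (since $v_F(n)\leq v_F(1)=1$ for every $n\in\Z$ by the non-archimedean inequality), so the rule $r\mapsto v_F(p)^r$ is order-reversing on $\Gamma_F\otimes\Q$. Hence the target inequality $\deg_F(\coim f)\geq \deg_F(\im f)$ is equivalent to the reverse inequality
\[\deg(\Fcal_{\coim}^\bullet)\leq \deg(\Fcal_{\im}^\bullet)\]
of the two filtration degrees.

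Next I would identify the two filtrations on the common $F\otimes_{\Q_p}K$-module $M_K$. The coimage carries the quotient filtration $\Fcal_{\coim}^i=f_K(\Fcal^i)$, while the image carries the subspace filtration $\Fcal_{\im}^i=\Fcal'^i\cap M_K$. Because $f$ is a morphism in $\Fil\Isoc(k)_F^K$, Definition~\ref{deffilisoc} gives $f_K(\Fcal^i)\subseteq\Fcal'^i$, and combined with $f_K(\Fcal^i)\subseteq M_K$ this yields the pointwise inclusion $\Fcal_{\coim}^i\subseteq\Fcal_{\im}^i$ for all $i\in\Z$.

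The last step is purely combinatorial. For any two descending, separated, exhaustive $\Z$-filtrations $\Fil_1^\bullet\subseteq\Fil_2^\bullet$ on a finite-dimensional $F$-vector space $V$, Abel summation yields the identity
\[\sum_i i\dim_F\gr^i\Fil^\bullet \;=\; \sum_{i\geq 1}\dim_F\Fil^i \;-\; \sum_{i\leq 0}\dim_F(V/\Fil^i),\]
from which $\Fil_1\subseteq\Fil_2$ forces $\deg\Fil_1\leq \deg\Fil_2$ term by term (the first sum is non-decreasing in the filtration, the second non-increasing, entering with opposite sign). Applied to $\Fcal_{\coim}^\bullet\subseteq\Fcal_{\im}^\bullet$ this gives the desired filtration inequality, and combined with the first paragraph the lemma follows. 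There is no substantial obstacle here; the only point requiring care is keeping track of the direction of inequalities under the paper's multiplicative convention for $v_F$.
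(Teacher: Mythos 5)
Your proof is correct and follows essentially the same route as the paper's: you reduce to the common underlying isocrystal $\coim f\cong\im f$, observe that the Newton factor in $\deg_F$ then coincides, and deduce the inequality from the inclusion $f_K(\Fcal^i)\subseteq\Fcal'^i\cap M_K$ of the quotient filtration inside the subspace filtration. The paper leaves the last two steps (the sign flip via $v_F(p)\leq 1$ and the monotonicity of the filtration degree under inclusion) to the reader; you have simply written them out, and your Abel-summation identity for $\sum_i i\dim_F\gr^i$ is a clean way to see the monotonicity.
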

\begin{proof}
Replacing $D$ by ${\rm coim}\,f$ and $D'$ by ${\rm im}\,f$, we may assume that $f$ is an isomorphism in $\Isoc(k)_F$. Now the assertion follows easily from 
\[(f\otimes \id) (\Fcal^i)\subset \Fcal'^i.\]
\end{proof}
\begin{defn}
An object $(D,\Phi,\Fcal^\bullet)\in\Fil\Isoc(k)_F^K$ is called \emph{semi-stable} if, for all $\Phi$-stable subobjects $D'\subset D$, we have $\mu(D')\geq \mu(D)$.
It is called \emph{stable} if the inequality is strict for all proper subobjects. Finally $D$ is called \emph{weakly admissible} if it is semi-stable of slope $1$.
\end{defn}
Note that semi-stability is defined using "$\geq$" instead of "$\leq$" (as in \cite{DatOrlikRapo}), since our valuations are written multiplicatively.
\begin{rem}
Let $(D,\Phi,\Fcal^\bullet)\in\Fil\Isoc(k)_{\Q_p}^K$. Using the notations of Remark $\ref{extensionandslop1}$, we find
\[\mu(D)=p^{v_p(\dete_{K_0}\Phi)-\sum_ii\dim_K(\Fcal^i/\Fcal^{i+1})}.\]
Hence we see that $D$ is weakly admissible if and only it is weakly admissible in the sense of \cite[3.4]{ColmezFont}. 
\end{rem}
\begin{prop}
Let $D,D'\in\Fil\Isoc(k)_F^K$ be semi-stable objects.\\
\noindent {\rm (i)} If $\mu(D)<\mu(D')$, then $\Hom(D,D')=0$.\\
\noindent {\rm (ii)} If $\mu(D)=\mu(D')=\mu$, then for all $f\in\Hom(D,D')$ we have $\im\,f\cong\coim\,f$ and the objects
$\ker f$, $\coker f$ and $\im f$ are semi-stable of slope $\mu$.
\end{prop}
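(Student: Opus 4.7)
The plan is to prove both parts simultaneously by extracting a chain of slope inequalities from Lemmas \ref{lemseseq} and \ref{lemcoimim}. Fix a morphism $f:D\to D'$ and consider the short exact sequence
\[0\longrightarrow \ker f \longrightarrow D\longrightarrow \coim f\longrightarrow 0\]
together with the canonical map $\coim f\to \im f$, which is an isomorphism in $\Isoc(k)_F$. Since $\ker f\subset D$ is a subobject, semi-stability of $D$ gives $\mu(\ker f)\geq \mu(D)$, and the monotonicity/stationary trichotomy in Lemma \ref{lemseseq} then forces $\mu(\coim f)\leq \mu(D)$. Dually, $\im f\subset D'$ combined with semi-stability of $D'$ gives $\mu(\im f)\geq \mu(D')$. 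Finally, since $\coim f$ and $\im f$ have the same $F\otimes_{\Q_p}K_0$-rank, Lemma \ref{lemcoimim} produces $\mu(\coim f)\geq \mu(\im f)$. Stringing these inequalities together yields
\[\mu(D')\leq \mu(\im f)\leq \mu(\coim f)\leq \mu(D)\]
for every nonzero $f$.

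Part (i) is now immediate: the hypothesis $\mu(D)<\mu(D')$ contradicts this chain, forcing $f=0$. For (ii), the assumption $\mu(D)=\mu(D')=\mu$ collapses all four terms to $\mu$. To obtain the isomorphism $\coim f\cong \im f$ in $\Fil\Isoc(k)_F^K$, note that the equality of slopes together with equality of ranks translates, via the definition of $\deg_F$, into equality of the filtration degrees $\deg\Fcal^\bullet_{\coim f}=\deg\Fcal^\bullet_{\im f}$. The canonical map respects filtrations, so after identifying underlying modules we have $\Fcal^i_{\coim f}\subseteq \Fcal^i_{\im f}$ for every $i$. Writing $a_i=\dim_F \Fcal^i_{\im f}-\dim_F \Fcal^i_{\coim f}\geq 0$, an Abel-summation computation shows that the difference of the two degree sums equals $\tfrac{1}{ef}\sum_i a_i$; its vanishing forces every $a_i=0$ and hence the desired equality of filtrations.

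The semi-stability assertions then follow from further slope bookkeeping. Subobjects of $\ker f$ and $\im f$ are already subobjects of $D$ and $D'$, respectively, so their slopes are $\geq \mu$ by hypothesis. For $\coker f$, any subobject $W\subset \coker f$ lifts to $W'\subset D'$ sitting in a short exact sequence
\[0\longrightarrow \im f\longrightarrow W'\longrightarrow W\longrightarrow 0;\]
since $\mu(\im f)=\mu$ and $\mu(W')\geq \mu$ by semi-stability of $D'$, Lemma \ref{lemseseq} forces $\mu(W)\geq \mu$. I expect the only delicate step to be the identification $\coim f\cong \im f$, since this is the one place the argument must go beyond tracking slopes and instead compare the two filtrations term by term, upgrading an inclusion of nested filtrations of equal total degree to an honest equality.
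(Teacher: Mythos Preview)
Your argument is correct and is precisely the standard Harder--Narasimhan argument that the paper defers to via the reference to \cite[Proposition 1.1.20]{DatOrlikRapo}; the chain of inequalities from Lemmas \ref{lemseseq} and \ref{lemcoimim}, the Abel-summation step forcing equality of filtrations, and the lifting argument for subobjects of $\coker f$ all match what that reference does. The only implicit assumption worth flagging is that your passage from $\deg_F(\coim f)=\deg_F(\im f)$ to $\deg\Fcal^\bullet_{\coim f}=\deg\Fcal^\bullet_{\im f}$ uses $v_F(p)\neq 1$, which is harmless in the paper's intended setting (residue fields of adic spaces over $\Q_p$) but is not literally part of the standing hypotheses on $F$.
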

\begin{proof}
The proof is the same as in \cite[Proposition 1.1.20]{DatOrlikRapo}
\end{proof}
\begin{cor}
Let $\mu\in\Gamma\otimes\Q$, then the full subcategory of $\Fil\Isoc(k)_F^K$ consisting of semi-stable objects of slope $\mu$ is an abelian, artinian and noetherian category which is stable under extensions. The simple objects are exactly the stable ones.
\end{cor}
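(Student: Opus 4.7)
The plan is to verify the four assertions in turn --- stability under extensions, the abelian structure, the noetherian and artinian properties, and the identification of simple objects. I denote the full subcategory in question by $\mathcal{C}_\mu$.

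For stability under extensions, I would consider a short exact sequence $0\to D'\to D\to D''\to 0$ with $D',D''\in\mathcal{C}_\mu$. Lemma \ref{lemseseq} shows that the triple $\mu(D'),\mu(D),\mu(D'')$ is monotone (or constant); since the outer terms are both equal to $\mu$, so is $\mu(D)$. To check semistability of $D$, let $D_1\subset D$ be any subobject; set $D_1'=D_1\cap D'$ as a subobject of $D'$, and let $D_1''$ be the image of $D_1$ in $D''$ as a subobject of $D''$. The morphism $D_1\to D''$ in $\Fil\Isoc(k)_F^K$ has kernel $D_1'$, coimage $D_1/D_1'$, and image $D_1''$, so Lemma \ref{lemcoimim} gives $\deg_F(D_1/D_1')\geq \deg_F(D_1'')$. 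Combining with the additivity of $\deg_F$ on the short exact sequence $0\to D_1'\to D_1\to D_1/D_1'\to 0$ and with the semistability bounds $\deg_F(D_1')\geq \mu^{\rk D_1'}$ and $\deg_F(D_1'')\geq \mu^{\rk D_1''}$ provided by $D',D''\in\mathcal{C}_\mu$, together with the additivity of rank $\rk D_1=\rk D_1'+\rk D_1''$, one obtains $\deg_F(D_1)\geq \mu^{\rk D_1}$, i.e.\ $\mu(D_1)\geq\mu$, as required.

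The abelian structure on $\mathcal{C}_\mu$ then follows immediately from the preceding proposition, which says precisely that kernels, images and cokernels of morphisms between objects of $\mathcal{C}_\mu$ again lie in $\mathcal{C}_\mu$ and that the canonical morphism $\coim f\to \im f$ is an isomorphism. For the noetherian and artinian properties, I would use that a subobject of a given filtered isocrystal is determined by its underlying $F\otimes_{\Q_p}K_0$-module (the filtration being the induced one); since the rank is bounded by $\rk D$, any chain of subobjects has bounded length.

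For the identification of simple objects, the key observation is that a subobject $D_1\subset D$ of a semistable $D\in\mathcal{C}_\mu$ with $\mu(D_1)=\mu$ is itself in $\mathcal{C}_\mu$: any further subobject $D_2\subset D_1$ satisfies $\mu(D_2)\geq\mu(D)=\mu=\mu(D_1)$ by semistability of $D$. Consequently $D$ is simple in $\mathcal{C}_\mu$ if and only if every nonzero proper subobject of $D$ has slope strictly greater than $\mu$, which is the definition of stability. I expect the stability under extensions to be the main obstacle, since it requires the combined use of Lemmas \ref{lemseseq} and \ref{lemcoimim} (to control the coimage-vs-image defect that is peculiar to filtered isocrystals) together with careful bookkeeping in the multiplicative order on $\Gamma_F\otimes\Q$; the remaining assertions are essentially formal consequences of the preceding proposition.
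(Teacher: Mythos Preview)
Your argument is correct and follows the standard Harder--Narasimhan formalism that the paper invokes by citing \cite[Corollary 1.2.21]{DatOrlikRapo} rather than writing out a proof. The only remark is that your write-up is considerably more explicit than the paper's one-line reference; the content (extensions via Lemmas~\ref{lemseseq} and~\ref{lemcoimim}, abelianness from the preceding proposition, finite length from bounded rank, and simple\,$=$\,stable via the observation that slope-$\mu$ subobjects of a semi-stable object are again semi-stable) is exactly what that reference contains.
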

\begin{proof}
The proof is the same as the proof of \cite[Corollary 1.2.21]{DatOrlikRapo}.
\end{proof}

The main result of this section is the existence of a Harder-Narasimhan filtration for the objects in $\Fil\Isoc(k)_F^K$. The existence of this filtration will also imply that semi-stability (and hence weak admissibility) is preserved under extension and restriction of scalars.

\begin{prop}\label{HNfilt}
Let $D\in\Fil\Isoc(k)_F^K$, then there exist unique elements $\mu_1<\mu_2<\dots<\mu_r\in\Gamma_F\otimes\Q$ and a unique filtration
\[0=D_0\subset D_1\subset D_2\subset\dots\subset D_r=D\]
of $D$ in $\Fil\Isoc(k)_F^K$ such that $D_i/D_{i-1}$ is semi-stable of slope $\mu_i$.
\end{prop}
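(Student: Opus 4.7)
The strategy is to adapt the standard Harder-Narasimhan construction of \cite[Chapter 1]{DatOrlikRapo} to the multiplicative notation used here. First I will exhibit a unique smallest-slope subobject $D_1 \subset D$ of maximal rank, show that it is semi-stable, and then pass to $D/D_1$ and induct on the rank. The base case $D_1 = D$ corresponds to $D$ being itself semi-stable, in which case the filtration $0 \subset D$ trivially works.

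For the construction of $D_1$, consider the set $S = \{\mu(D') : 0 \neq D' \subset D\} \subset \Gamma_F \otimes \Q$. I claim $S$ is finite. The degree $\deg_F(D')$ factors as a product of a Newton contribution $v_F(\dete_F \Phi^f|_{D'})^{-1/f^2}$ and a Hodge contribution $v_F(p)^{\deg(\Fcal^\bullet|_{D'_K})}$. By Proposition \ref{slopedecomp} and Lemma \ref{lemabbNewton}, any $\Phi$-stable submodule respects the slope decomposition $D = \bigoplus D_{\lambda_i}$, so the Newton factor takes only finitely many values. The exponent $\deg(\Fcal^\bullet|_{D'_K})$ lies in $\tfrac{1}{ef}\Z$ and is bounded in absolute value by a quantity depending only on $\rk D$ and the width of the Hodge filtration, hence also takes only finitely many values. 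Since $\rk D' \in \{1,\dots,\rk D\}$, the set $S$ is finite and admits a minimum $\mu_1$. Choose $D_1$ of slope $\mu_1$ and of maximal rank among such subobjects.

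For uniqueness of $D_1$, suppose $D_1'$ is another subobject of slope $\mu_1$ and maximal rank. The sequence $0 \to D_1 \cap D_1' \to D_1 \oplus D_1' \to D_1 + D_1' \to 0$ is exact in $\Isoc(k)_F$; combining Lemma \ref{lemseseq} applied to this sequence endowed with the quotient filtration on $D_1 + D_1'$, Lemma \ref{lemcoimim} comparing the quotient filtration with the induced subspace filtration, and the minimality of $\mu_1$, forces $\mu(D_1 + D_1') = \mu_1$, contradicting maximality of $\rk D_1$ unless $D_1 = D_1'$. Minimality of $\mu_1$ also shows that $D_1$ is semi-stable. Passing to $D/D_1$ with its induced quotient filtration and applying the inductive hypothesis produces a filtration with semi-stable quotients of strictly increasing slopes $\mu_2 < \dots < \mu_r$; the strict inequality $\mu_2 > \mu_1$ follows because otherwise pulling back the bottom piece would yield a subobject of $D$ of slope $\leq \mu_1$ with rank exceeding $\rk D_1$, contradicting the maximality used to build $D_1$. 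Uniqueness of the entire filtration then reduces inductively to uniqueness of $D_1$.

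The only genuinely non-formal point is the finiteness of $S$: over an arbitrary valued field $F$ the Newton contributions could in principle vary continuously, and the argument hinges crucially on the structural content of Proposition \ref{slopedecomp}. Once that finiteness is established, the remaining steps follow the familiar Harder-Narasimhan template of \cite{DatOrlikRapo}.
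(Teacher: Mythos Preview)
Your proof is correct and follows essentially the same strategy as the paper: establish finiteness of the slope set via the Newton slope decomposition of Proposition~\ref{slopedecomp}, take the minimal slope $\mu_1$, exhibit a unique maximal subobject of that slope (necessarily semi-stable), and induct on the quotient. The only cosmetic differences are that the paper argues the maximal-subobject step directly from Lemma~\ref{lemseseq} (``a sum of two slope-$\mu_1$ subobjects again has slope $\mu_1$'') rather than through your $D_1\oplus D_1'$ sequence and Lemma~\ref{lemcoimim}, and defers uniqueness of the whole filtration to the subsequent Lemma~\ref{lemabbHN} instead of handling it inductively inside the proof.
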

\begin{proof}
The proof is similar to the proof of \cite[Proposition 1.3.1 (a)]{DatOrlikRapo}. \\
First we prove the existence of the filtration. The uniqueness will then follow from Lemma $\ref{lemabbHN}$ below. \\
By the existence of the slope decomposition in Proposition $\ref{slopedecomp}$, the set
\[\{\mu(D')\mid D'\subset D\ \text{stable under}\ \Phi\}\]
is finite. Hence there is a unique minimal element $\mu_1$ and we claim that there is a maximal subobject $D_1\subset D$ of slope $\mu_1$ which then must be semi-stable.
This follows, as the sum of two subobjects of slope $\mu_1$ has again slope $\mu_1$, by Lemma $\ref{lemseseq}$ and the minimality of $\mu_1$.\\
Proceeding with $D/D_1$ the claim follows by induction.
\end{proof}
\begin{defn}\label{defHN}
Let $D\in\Fil\Isoc(k)_F^K$ and denote by 
\[0=D_0\subset D_1\subset D_2\subset\dots\subset D_r=D\]
a filtration as in Proposition $\ref{HNfilt}$. This filtration is called the \emph{Harder-Narasimhan filtration} of $D$. For $\mu\in\Gamma_F\otimes\Q$ we define
\[
D_{(\mu)}=
\begin{cases} 0& \text{if}\ \mu<\mu_1 \\
 D_i &\text{if}\ \mu_i\leq \mu<\mu_{i+1}\\
 D &\text{if}\ \mu\geq \mu_r.
 \end{cases}
\] 
\end{defn}
\begin{lem}\label{lemabbHN}
Let $f:D\rightarrow D'$ be a morphism in $\Fil\Isoc(k)_F^K$ and fix filtrations of $D$ and $D'$ as in Proposition $\ref{HNfilt}$. Let $\mu\in\Gamma_F\otimes\Q$, then {\rm (}with the notation of Definition $\ref{defHN}${\rm )}:
\[f(D_{(\mu)})\subset D'_{(\mu)}.\]
\end{lem}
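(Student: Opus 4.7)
The plan is to reformulate the conclusion as the vanishing of the composition
\[
 g\colon D_{(\mu)}\hookrightarrow D\xrightarrow{f}D'\twoheadrightarrow D'/D'_{(\mu)};
\]
this is clearly equivalent to $f(D_{(\mu)})\subset D'_{(\mu)}$. By Proposition~\ref{HNfilt}, the subobject $D_{(\mu)}=D_i$ carries the initial part of the HN filtration of $D$, whose semi-stable graded pieces $D_k/D_{k-1}$ have slopes $\mu_k\leq\mu$. On the other hand, the pieces of the HN filtration of $D'$ above level $j$ (where $D'_{(\mu)}=D'_j$) induce a filtration on $D'/D'_{(\mu)}$ whose graded pieces are semi-stable of slopes strictly greater than $\mu$; by the uniqueness in Proposition~\ref{HNfilt} this is the HN filtration of $D'/D'_{(\mu)}$.

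It then suffices to prove the following auxiliary statement: if $A,B\in\Fil\Isoc(k)_F^K$ are such that every slope in the HN filtration of $A$ is $\leq\mu$ and every slope in the HN filtration of $B$ is $>\mu$, then $\Hom(A,B)=0$. I would argue this by double induction on the HN lengths of $A$ and $B$. The base case, in which both $A$ and $B$ are semi-stable, is immediate from the $\Hom$-vanishing result for semi-stable objects of distinct slopes proved earlier in the section: the hypotheses give $\mu(A)\leq\mu<\mu(B)$, a strict inequality, whence $\Hom(A,B)=0$. To reduce from general $B$ to a semi-stable $B$ (with $A$ still semi-stable), compose any $\psi\in\Hom(A,B)$ with the projection $B\twoheadrightarrow B/B_{s-1}$ onto the top HN quotient, which is semi-stable of slope $>\mu$; the base case forces this composition to vanish, so $\psi$ factors through $B_{s-1}$, whose HN length is smaller. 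The corresponding reduction on $A$ is symmetric: restrict $\psi$ to the first HN step $A_1\subset A$, a semi-stable object of slope $\leq\mu$, apply the previous sub-case to conclude $\psi|_{A_1}=0$, and deduce that $\psi$ factors through $A/A_1$.

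No serious obstacle is foreseen; the entire argument is a mechanical unwinding of the $\Hom$-vanishing between semi-stable objects through the HN filtrations of source and target. The only point that needs a brief separate check is the identification of the HN filtration of the quotient $D'/D'_{(\mu)}$, and this is a direct consequence of the uniqueness clause in Proposition~\ref{HNfilt}.
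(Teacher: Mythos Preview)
Your argument is the standard Harder--Narasimhan devissage and is essentially what the paper has in mind: the paper's own proof consists of the single sentence ``The proof is the same as in \cite[Proposition 1.3.1 (b)]{DatOrlikRapo},'' and the argument there is precisely the reduction to $\Hom$-vanishing between semi-stable objects that you carry out.

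There is, however, a logical circularity you should clean up. You invoke the \emph{uniqueness} clause of Proposition~\ref{HNfilt} to identify the HN filtration of $D'/D'_{(\mu)}$ (and implicitly of $D_{(\mu)}$, $B_{s-1}$, $A/A_1$ in the inductive steps). But in the paper's structure, uniqueness in Proposition~\ref{HNfilt} is deduced \emph{from} the present lemma; you therefore cannot appeal to it here. Fortunately nothing in your induction actually requires uniqueness. Rephrase the auxiliary statement as: if $A$ admits a finite filtration with semi-stable subquotients of slopes $\leq\mu$ and $B$ admits one with semi-stable subquotients of slopes $>\mu$, then $\Hom(A,B)=0$, and induct on the lengths of these fixed filtrations. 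The filtrations on $D_{(\mu)}$ and $D'/D'_{(\mu)}$ are then simply the ones induced from the \emph{chosen} HN filtrations of $D$ and $D'$ (as in the hypothesis ``fix filtrations of $D$ and $D'$''), with no appeal to uniqueness needed. With that adjustment the proof is complete.
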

\begin{proof}
The proof is the same as in \cite[Proposition 1.3.1 (b)]{DatOrlikRapo}.
\end{proof}
\begin{cor}\label{waextstab}
Let $F'$ be an extension of $F$ with valuation $v_{F'}$ extending $v_F$ and $D\in\Fil\Isoc(k)_F^K$. 
If $D$ is semi-stable of slope $\mu$, then $D'=D\otimes_FF'$ is semi-stable of slope $\mu$.\\
If in addition $F'$ is finite over $F$ and $D'\in\Fil\Isoc(k)_{F'}^K$ is semi-stable of slope $\mu$, then $\epsilon_{F'/F}(D')\in\Fil\Isoc(k)_F^K$ is semi-stable of slope $\mu$. 
\end{cor}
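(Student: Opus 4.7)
Both statements will be deduced from uniqueness of the Harder--Narasimhan filtration (Proposition \ref{HNfilt}) together with its functoriality (Lemma \ref{lemabbHN}), once I record that the slope $\mu$ is preserved under extension and restriction of scalars. The Newton factor of the slope is unchanged by Lemma \ref{extensionandslop1}, and the filtration degree $\deg\Fcal^\bullet$ is insensitive to base change because dimensions of the graded pieces are; invariance of $\mu$ follows.

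For the restriction of scalars statement, let $D'\in\Fil\Isoc(k)_{F'}^K$ be semi-stable of slope $\mu$ and consider the Harder--Narasimhan filtration
\[
0=E_0\subset E_1\subset\cdots\subset E_r=\epsilon_{F'/F}(D')
\]
in $\Fil\Isoc(k)_F^K$. For every $f'\in F'$, multiplication by $f'$ is an endomorphism of $\epsilon_{F'/F}(D')$ in $\Fil\Isoc(k)_F^K$: it is $F\otimes_{\Q_p}K_0$-linear, commutes with $\Phi'$ because $\Phi'$ is $F'$-linear in its original structure, and respects the filtration, which is $F'\otimes_{\Q_p}K$-linear. Lemma \ref{lemabbHN} then forces $f'\cdot E_i\subseteq E_i$ for every $i$, so each $E_i$ is in fact an $F'\otimes_{\Q_p}K_0$-submodule, and the filtration lies in $\Fil\Isoc(k)_{F'}^K$. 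Uniqueness of the Harder--Narasimhan filtration of the semi-stable object $D'$ in this latter category forces $r=1$ and $\mu_1=\mu$.

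For the extension of scalars statement, I first treat the case of a finite Galois extension $F'/F$ with group $G$; the action of $G$ on $D\otimes_F F'$ via the second tensor factor commutes with $\Phi\otimes\id$ and preserves the filtration $\Fcal^\bullet\otimes_F F'$. Since $v_{F'}$ extends $v_F$ and is $G$-invariant in the henselian setting of the paper, $G$ preserves the slope of every $F'\otimes_{\Q_p}K_0$-submodule. Uniqueness of the Harder--Narasimhan filtration of $D\otimes_F F'$ together with the strict ordering of the slopes then forces each HN step to be $G$-stable, so Galois descent presents it as the extension of scalars of an $F$-subobject of $D$; this yields a Harder--Narasimhan filtration of $D$, and semi-stability of $D$ forces it to be trivial with slope $\mu$. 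For an arbitrary extension $F'/F$, one reduces to the finite Galois case by observing that any destabilizing subobject of $D\otimes_F F'$ is generated by finitely many elements whose coordinates lie in a finitely generated, hence finite, subextension $F''/F$, which one enlarges to its Galois closure inside a separable closure.

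The principal obstacle is verifying $G$-invariance of the valuation $v_{F'}$ in the Galois descent step, together with the reduction from a general extension to a finite Galois subextension while keeping the slope data intact; once these are in place, the argument parallels the classical Harder--Narasimhan formalism of \cite[Chapter 1]{DatOrlikRapo}.
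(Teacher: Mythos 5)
Your treatment of the restriction-of-scalars statement is correct and is essentially the paper's argument: multiplication by elements of $F'$ is an endomorphism in $\Fil\Isoc(k)_F^K$, so Lemma~\ref{lemabbHN} forces the Harder--Narasimhan steps of $\epsilon_{F'/F}(D')$ to be $F'$-stable, whence they live in $\Fil\Isoc(k)_{F'}^K$ and semi-stability of $D'$ collapses the filtration.

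The extension-of-scalars part, however, has a genuine gap in the reduction step. You claim that a destabilizing subobject of $D\otimes_F F'$ is defined over ``a finitely generated, hence finite'' subextension $F''/F$ and then apply Galois descent. But a finitely generated field extension is of course not finite in general --- it can have positive transcendence degree --- and there is no Galois group to descend along in the purely transcendental direction. The paper's proof reduces to the finitely generated case exactly as you do, but then factors $F'' = F(t_1,\ldots,t_n)^{\mathrm{alg}}$ as algebraic over purely transcendental and treats the two layers by \emph{different} arguments. For the algebraic layer it uses Galois descent much as you propose. For the purely transcendental layer Galois descent is unavailable; instead the paper observes that any $\Phi$-stable $U\subset D\otimes_F F''$ decomposes into indecomposable $\Phi$-modules whose isomorphism classes are already defined over $F$ (because $F$ is algebraically closed in a purely transcendental extension), so $\dete_{F''}(\Phi^f|_U)\in F$ and the slope of $U$ is automatically $\mathrm{Aut}(F''/F)$-invariant. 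This argument is what lets the HN filtration descend in the transcendental case, and it is not recoverable from the Galois-theoretic machinery you invoke. Without it, your proof only covers $F'$ algebraic over $F$. (A smaller point: the $G$-invariance of the extended valuation in the Galois step should be justified by completeness/Henselianness of the relevant residue fields, not taken as a blanket hypothesis of the paper; but this is a presentational matter, while the transcendental case is a real missing piece.)
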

\begin{proof}
We may assume that $F'$ is finitely generated over $F$ as every counterexample for the semi-stability condition is defined over a finitely generated extension. Then $F'$ is an algebraic extension of a purely transcendental extension and we can treat both cases separately. 

Assume first that $F'$ is an algebraic extension of $F$. We may replace it by its Galois hull and denote by $G={\rm Gal}(F'/F)$ the Galois group of $F'$ over $F$. Then the action of $G$ preserves the valuation on $F'$. We denote by 
\[0=D'_0\subset D'_1\subset D'_2\subset\dots\subset D'_r=D'\]
the Harder-Narasimhan filtration of $D'$. The action of $G$ commutes with $\Phi$ and preserves the filtration $\Fcal^\bullet\otimes_FF'$ of $D'\otimes_{K_0}K$. It follows that it preserves the slope of a $\Phi$-stable subobject and hence preserves the Harder-Narasimhan filtration. It follows that the filtration descends to $F$ and hence it can only have one step, as $D$ is semi-stable.

Assume now that $F'$ is purely transcendental over $F$.  Again we write $G={\rm Aut}(F'/F)$ for the group of $F$-automorphisms of $F'$. As above we only need to check that $G$ preserves the slope of a $\Phi$-stable subobject of $D'$. Let $U\subset D'$ be such a $\Phi$-stable subspace. Then $U$ is a direct sum of indecomposable $\Phi$-modules $U_i$ such that the isomorphism class of $U_i$ is defined over $F$, as $F$ is algebraically closed in $F'$. It follows that $\dete_{F'}(\Phi^f|_U)\in F$ and hence the action of $G$ preserves the slope of $U$.

Now assume that $F'$ is finite over $F$ and $D'$ is a semi-stable object of $\Fil\Isoc(k)_{F'}^K$. Consider the Harder-Narasimhan filtration of $\epsilon_{F'/F}(D')$. By Lemma $\ref{lemabbHN}$ the filtration steps are stable under the operation of $F'$. Hence the filtration can have only one step.
\end{proof}

\section{Families of filtered $\phi$-modules}

It is shown in \cite[4]{Hellmann} that the stack of weakly admissible filtered $\phi$-modules is an open substack of the stack of filtered $\phi$-modules. We briefly recall this result before we study the weakly admissible locus in the fibers over the adjoint quotient. We write $\Rig_E$ for the category of rigid analytic spaces over a finite extension $E$ of $\Q_p$ (see \cite{BoschGR}) and $\Ad^{\lft}_E$ for the category of adic spaces locally of finite type over $E$, see \cite{Hu2}.

\subsection{Stacks of filtered $\phi$-modules}
Let $d$ be a positive integer and $\nu$ an algebraic cocharacter 
\begin{equation}\label{cocharnu}
\nu:\bar\Q_p^\times\longrightarrow (\Res_{K/\Q_p}A_K)(\bar\Q_p),
\end{equation}
where $A\subset \GL_d$ is the diagonal torus. We assume that this cocharacter is dominant with respect to the restriction $B$ of the Borel subgroup of upper triangular matrices in $(\GL_d)_K$. 
This cocharacter is defined over the reflex field $E\subset \bar\Q_p$. 
Let $\Delta$ denote the set of simple roots (defined over $\bar\Q_p$) of $\Res_{K/\Q_p}\GL_d$ with respect to $B$ and denote by $\Delta_\nu\subset \Delta$ the set of all simple roots $\alpha$ such that $\langle \alpha,\nu\rangle=0$. Here $\langle -,-\rangle$ is the canonical pairing between characters and cocharacters.
We write $P_\nu$ for the parabolic subgroup of $(\Res_{K/\Q_p}\GL_d)$ containing $B$ and corresponding to $\Delta_\nu\subset \Delta$. This parabolic subgroup is defined over $E$, and the quotient by this parabolic is a projective variety over $E$,
\begin{equation}
\Gr_{K,\nu}=(\Res_{K/\Q_p}\GL_d)_E/P_\nu
\end{equation}
representing the functor 
\[S\mapsto\{\text{filtrations}\ \Fcal^\bullet\ \text{of}\ \Ocal_S\otimes_{\Q_p}K^d\ \text{of type}\ \nu\}\]
on the category of $E$-schemes.  Here the filtrations are locally on $S$ direct summands. Being of \emph{type} $\nu$ means the following. Assume that the cocharacter 
\[\nu:\bar\Q_p^\times\longrightarrow \prod_{\psi:K\rightarrow \bar\Q_p}\GL_d(\bar\Q_p)\]
is given by cocharacters
\[\nu_\psi: \lambda\mapsto {\rm diag}((\lambda^{i_1(\psi)})^{(m_1(\psi))},\dots,(\lambda^{i_r(\psi)})^{(m_r(\psi))})\]
for some integers $i_j(\psi)\in\Z$ and multiplicities $m_j(\psi)>0$. Then any point $\Fcal^\bullet\in\Gr_{K,\nu}(\bar\Q_p)$ is a filtration $\prod_\psi\Fcal_\psi^\bullet$ of $\prod_\psi\bar\Q_p^d$ such that
\[\dim_{\bar\Q_p}\gr_i(\Fcal_\psi^\bullet)=\begin{cases}
0& \text{if}\ i\notin\{i_1(\psi),\dots,i_r(\psi))\}\\
m_j(\psi) & \text{if}\ i=i_j(\psi). 
\end{cases}\] 
We denote by $\Gr_{K,\nu}^{\rig}$ resp. $\Gr_{K,\nu}^{\ad}$ the associated rigid space, resp. the associated adic space (cf. \cite[9.3.4]{BoschGR}).

Given $\nu$ as in $(\ref{cocharnu})$ and denoting as before by $E$ the reflex field of $\nu$, we consider the following fpqc-stack $\Dfrak_\nu$ on the category ${\Rig}_E$ (resp. on the category ${\Ad}_E^{\lft}$).
For $X\in{\Rig}_E$ (resp. ${\Ad}_E^{\lft}$) the groupoid $\Dfrak_\nu(X)$ consists of triples $(D,\Phi,\Fcal^\bullet)$, where $D$ is a coherent $\Ocal_X\otimes_{\Q_p}K_0$-modules
which is locally on $X$ free over $\Ocal_X\otimes_{\Q_p}K_0$ and $\Phi:D\rightarrow D$ is an $\id\otimes\phi$-linear automorphism. Finally $\Fcal^\bullet$ is a filtration of $D_K=D\otimes_{\Q_p}K$ of type $\nu$, i.e. after choosing fpqc-locally on $X$ a basis of $D$, the filtration $\Fcal^\bullet$ induces a map to $\Gr_{K,\nu}^{\rig}$ (resp. $\Gr_{K,\nu}^{\ad}$), compare also \cite[5.a]{phimod}.\\ 

One easily sees that the stack $\Dfrak_\nu$ is the stack quotient of the rigid space
\begin{equation}\label{Xnu}
X_\nu=(\Res_{K_0/\Q_p}\GL_d)_E^{\rig}\times\Gr_{K,\nu}^{\rig}
\end{equation}
by the $\phi$-conjugation action of $(\Res_{K_0/\Q_p}\GL_d)_E^{\rig}$ given by 
\begin{equation}\label{phiconjaction}
(A,\Fcal^\bullet)\cdot g=(g^{-1}A\phi(g),g^{-1}\Fcal^\bullet).
\end{equation}
Here the canonical map
$X_\nu\rightarrow \Dfrak_\nu$
is given by 
\[(A,\Fcal^\bullet)\mapsto (\Ocal_{X_\nu}\otimes_{\Q_p}K_0^d, A(\id\otimes\phi),\Fcal^\bullet).\]
\subsection{The weakly admissible locus}
Fix a cocharacter $\nu$ with reflex field $E$ as in the previous section.
If $X\in{\Ad}_E^{\lft}$ and $x\in X$, then our definitions imply that, given $(D,\Phi,\Fcal^\bullet)\in\Dfrak_\nu(X)$, we have 
\begin{align*}
(D\otimes k(x),\Phi\otimes\id,\Fcal^\bullet\otimes k(x)) & \in\Fil\Isoc(k)_{k(x)}^K
\end{align*}
One of the main results of \cite{Hellmann} is concerned with the structure of the weakly admissible locus in the stacks $\Dfrak_\nu$ defined above.
\begin{theo}\label{waopen}
Let $\nu$ be a cocharacter as in $(\ref{cocharnu})$ and $X$ be an adic space locally of finite type over the reflex field of $\nu$.
If $(D,\Phi,\Fcal^\bullet)\in\Dfrak_\nu(X)$, then the weakly admissible locus
\begin{align*}
X^{\rm wa}&=\{x\in X\mid (D\otimes k(x),\Phi\otimes\id,\Fcal^\bullet\otimes k(x))\ \text{is weakly admissible}\}
\end{align*}
is an open subset. Especially it has a canonical structure of an adic space.
\end{theo}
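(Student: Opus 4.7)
The plan is to prove that the complement $X\setminus X^{\rm wa}$ is closed in $X$ by exhibiting it as a finite union of closed subsets, each arising as the image of a proper morphism from a moduli space of potential destabilizing sub-$\phi$-modules. Since openness is local on $X$, I would first reduce to the case where $X=\Spa(A,A^+)$ is affinoid and $D$ is free of rank $d$ over $\Ocal_X\otimes_{\Q_p}K_0$, so that $\Phi$ is given by a matrix in $(\Res_{K_0/\Q_p}\GL_d)(X)$ and $\Fcal^\bullet$ corresponds to a morphism $X\to\Gr_{K,\nu}^{\ad}$.

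Since $\nu$ is fixed, the quantity $\deg(\Fcal^\bullet)$ is a rational number $\delta$ depending only on $\nu$, and the condition $\mu(D_y)=1$ translates into an equality $v_y(\dete_F\Phi^f)^a=v_y(p)^b$ of valuations of two global units on $X$, for explicit positive integers $a,b$ attached to $\nu$. Such an equality of valuations is the intersection of two opposite rational-domain inequalities, hence defines an open subset $X_0\subset X$ containing $X^{\rm wa}$. Working inside $X_0$, by Proposition~\ref{HNfilt} the failure of semi-stability at a point $y\in X_0$ is witnessed by the existence of a $\Phi$-stable $k(y)\otimes K_0$-submodule $D'\subset D\otimes k(y)$ of some rank $0<r<d$, with induced filtration of some type $\nu'$, such that $\mu(D')<1$. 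For each admissible pair $(r,\nu')$ --- of which there are only finitely many, by interlacing bounds forcing the jumps of $\nu'$ to lie among those of $\nu$ --- I would introduce an auxiliary adic space $Z_{r,\nu'}\to X$, built as a closed subspace of the relative Grassmannian $\Gr(r,D)$ cut out by the $\Phi$-stability condition (vanishing of the composite $D'\hookrightarrow D\xrightarrow{\Phi}D\twoheadrightarrow D/D'$) together with the closed condition that the filtration induced on $D'_K$ by $\Fcal^\bullet$ be of type $\nu'$. As such, $Z_{r,\nu'}$ is proper over $X$.

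Over $Z_{r,\nu'}$ the condition $\mu(D')<1$ again reduces, by the rational-domain argument applied to the global function $\dete_F\Phi^f|_{D'}$ and the integer determined by $\nu'$, to a disjoint union over the finitely many possible values $\mu_0<1$ of closed-open subloci cutting out ``$\mu(D')=\mu_0$'' on $Z_{r,\nu'}$. The image of each such sublocus in $X_0$ is closed by properness of $Z_{r,\nu'}\to X$, and the finite union over $(r,\nu',\mu_0)$ exhausts $X_0\setminus X^{\rm wa}$. The \textbf{main obstacle} I expect is precisely in this last step: in adic spaces a strict inequality of valuations is generically an open condition rather than a closed one, so one must carefully rephrase destabilization as a union of \emph{closed} strata indexed by the discrete invariants $(r,\nu',\mu_0)$, and verify that only finitely many such tuples are relevant. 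This finiteness should follow from the slope decomposition of Proposition~\ref{slopedecomp} combined with the discrete nature of the filtration type $\nu$ and an analogue of Mazur's inequality bounding the Newton slopes of possible sub-$\phi$-modules.
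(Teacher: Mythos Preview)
The paper does not actually prove this theorem: its entire proof is the citation ``This is \cite[Theorem~4.1]{Hellmann}.'' However, the argument in the companion paper is mirrored in the proof of Proposition~\ref{maintheoforrigpt} here, and your overall strategy---parametrize $\Phi$-stable submodules of each rank by a space proper over $X$, use upper semicontinuity of the Hodge degree of the induced filtration, and push the ``bad'' locus down by properness---is exactly the right one and agrees with that argument.

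There are, however, two genuine errors in how you execute the final step. First, the condition that the induced filtration on $D'_K$ be \emph{exactly} of type $\nu'$ is only locally closed, not closed, so your $Z_{r,\nu'}$ need not be proper over $X$. The fix (as in the proof of Proposition~\ref{maintheoforrigpt}) is to stratify instead by the closed conditions $h_i\geq m$ on the Hodge degree, which is integer-valued and upper semicontinuous.

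Second, and more importantly, your ``main obstacle'' is mis-diagnosed, and your proposed workaround is both unnecessary and incorrect. Once the Hodge degree is pinned to $\geq m$, the destabilization condition becomes a \emph{strict} inequality $|f_i|>|p^{N}|$ (or $<$, depending on conventions) between two global \emph{units}: $f_i=\dete(\Phi^f|_{D'})$ and a power of $p$. In an adic space, for units $f,g$ the set $\{|f|\leq|g|\}$ is a rational subdomain, hence open; its complement $\{|f|>|g|\}$ is therefore closed. So the strict inequality is already closed---no further stratification is needed. Your proposed remedy, decomposing into finitely many slope values $\mu_0<1$, fails over a general base $X$: the Newton slope of a sub-$\phi$-module varies continuously with the point of $X$, and there is no reason for it to take only finitely many values. (The finiteness argument you may have in mind does appear in the proof of Proposition~\ref{maintheoforrigpt}, but there it is used over a \emph{single point} of $A/W$, where the eigenvalues of $\Phi^f$ are fixed; and it is needed for the stronger conclusion that the weakly admissible locus is the adification of a \emph{scheme}, not merely that it is open.)
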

\begin{proof}
This is \cite[Theorem 4.1]{Hellmann}.
\end{proof}
We can define a substack $\Dfrak_\nu^{\rm wa}\subset \Dfrak_\nu$ consisting of the weakly admissible filtered isocrystals. More precisely, for an adic space $X$ the groupoid $\Dfrak_\nu^{\rm wa}(X)$ consists of those triples $(D,\Phi,\Fcal^\bullet)$ such that $(D\otimes k(x),\Phi\otimes\id,\Fcal^\bullet\otimes k(x))$ is weakly admissible for all $x\in X$. Thanks to Corollary $\ref{waextstab}$ it is clear that this is again an fpqc-stack. The following result is now an obvious consequence of Theorem $\ref{waopen}$.
\begin{cor}
The stack $\Dfrak_\nu^{\rm wa}$ on the category of adic spaces locally of finite type over the reflex field of $\nu$ is an open substack of $\Dfrak_\nu$.
\end{cor}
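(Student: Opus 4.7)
The plan is to verify directly that $\Dfrak_\nu^{\rm wa}\subset \Dfrak_\nu$ satisfies the two conditions to be an open substack: first, that $\Dfrak_\nu^{\rm wa}$ is indeed an fpqc-substack of $\Dfrak_\nu$ on $\Ad^{\lft}_E$; second, that for every morphism $X\to \Dfrak_\nu$ from an object $X\in \Ad^{\lft}_E$, the $2$-fiber product $X\times_{\Dfrak_\nu}\Dfrak_\nu^{\rm wa}$ is an open subspace of $X$.

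For the first point, I would check fpqc-descent. By definition, a triple $(D,\Phi,\Fcal^\bullet)\in \Dfrak_\nu(X)$ belongs to $\Dfrak_\nu^{\rm wa}(X)$ precisely when $(D\otimes k(x),\Phi\otimes\id,\Fcal^\bullet\otimes k(x))\in \Fil\Isoc(k)_{k(x)}^K$ is weakly admissible for every $x\in X$. Given an fpqc-cover $Y\to X$ and a point $x\in X$, pick any preimage $y\in Y$; the fiber at $y$ is obtained from the fiber at $x$ by extension of scalars along $k(x)\hookrightarrow k(y)$. By Corollary \ref{waextstab}, weak admissibility is preserved both by arbitrary extension of scalars and (in the finite case) by restriction of scalars, so the fiber at $x$ is weakly admissible if and only if the fiber at $y$ is. This shows that $\Dfrak_\nu^{\rm wa}(X)\to \Dfrak_\nu^{\rm wa}(Y)$ is precisely the preimage of $\Dfrak_\nu(X)\to \Dfrak_\nu(Y)$, which is enough to inherit the fpqc-substack structure from $\Dfrak_\nu$.

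For the second point, let $X\to \Dfrak_\nu$ classify an object $(D,\Phi,\Fcal^\bullet)\in\Dfrak_\nu(X)$. Chasing the definitions, the $2$-fiber product $X\times_{\Dfrak_\nu}\Dfrak_\nu^{\rm wa}$ is represented by the set $X^{\rm wa}$ of Theorem \ref{waopen} applied to $(D,\Phi,\Fcal^\bullet)$. That theorem asserts that $X^{\rm wa}\subset X$ is open, so $\Dfrak_\nu^{\rm wa}\hookrightarrow \Dfrak_\nu$ is an open immersion of stacks on $\Ad^{\lft}_E$.

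In summary, the argument is entirely formal; the only nontrivial input is Theorem \ref{waopen}, which provides the openness in families, while Corollary \ref{waextstab} ensures that weak admissibility is a well-defined substack condition under fpqc-descent. There is no real obstacle, and the statement follows once one has matched up definitions of $2$-fiber product and open substack with the pointwise locus $X^{\rm wa}$.
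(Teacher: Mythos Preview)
Your proof is correct and follows exactly the same approach as the paper, which simply states that the corollary is ``an obvious consequence of Theorem~\ref{waopen}'' after noting (via Corollary~\ref{waextstab}) that $\Dfrak_\nu^{\rm wa}$ is an fpqc-stack. You have merely spelled out the formal verification that the paper leaves implicit.
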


\section{The fibers over the adjoint quotient}

We now come to the main results of this paper. We want to link the weakly admissible locus in 
\[(\Res_{K_0/\Q_p}\GL_d\times\Gr_{K,\nu})^{\ad}\]
as considered in the previous section to the adjoint quotient of the group $\GL_d$. This relation was studied by Breuil and Schneider in \cite{BreuilSchneider}. 
In this section we show that the fibers over the adjoint quotient are (base changes of) analytifications of schemes over $\Q_p$ and hence the period stacks considered here have a much more algebraic nature than the period spaces considered by Rapoport and Zink in \cite{RapoZink}. In the next section we determine the image of the weakly admissible locus in the adjoint quotient and identify it with a closed Newton-stratum in the sense of Kottwitz \cite{Kottwitz}.

First we need to recall some notations and facts about the adjoint quotient from \cite{Kottwitz}.
We write $\GL_d=\GL (V)$ for the general linear group over $\Q_p$, where $V=\Q_p^d$, and $B\subset \GL_d$ for the Borel subgroup of upper triangular matrices.
Further we denote by $A\subset B$ the diagonal torus and identify $X_\ast(A)$ and $X^\ast(A)$ with $\Z^d$.
Let $\Delta=\{\alpha_1,\dots,\alpha_{d-1}\}$ be the simple roots defined by $B$, i.e. $\langle \alpha_i,\nu\rangle=\nu_i-\nu_{i+1}$ for all $\nu\in X_\ast(A)$.
We also choose lifts 
\[\omega_i=(1^{(i)},0^{(d-i)})\in\Z^d=X^\ast (A)\]
of the dual basis $\varpi_1,\dots\varpi_{d-1}\in X^\ast({\rm GL}_d)$ of the coroots. 
Finally $W=S_d$ denotes the Weyl group of $(\GL_d,A)$. 
There is a map 
\[c:A\longrightarrow \Abb^{d-1}\times\Gbb_m\]
which maps an element of $A$ to the coefficients of its characteristic polynomial. This morphism identifies $A/W$ with $\Abb^{d-1}\times\Gbb_m$.

Now we will define a map
\begin{equation}\label{maptoAmodW}
G=\Res_{K_0/\Q_p}(\GL_d)_{K_0}\longrightarrow A/W
\end{equation}
that is invariant under $\phi$-conjugation on the left side.
Recall that we have identifications $\GL_d=\GL(V)$ and $(\GL_d)_{K_0}=\GL(V_0)$, where $V_0=V\otimes_{\Q_p}K_0$.\\
For an $\Q_p$-algebra $R$ and $g\in G(R)$ we have the $R\otimes_{\Q_p} K_0$-linear automorphism $\Phi_g^f=(g(\id\otimes\phi))^f$ of $R\otimes_{\Q_p} V_0$.
Its characteristic polynomial is an element of $(R\otimes_{\Q_p} K_0)[T]$. Now this polynomial is invariant under $\id\otimes\phi$ and hence it already lies in $R[T]$.
We define the morphism in $(\ref{maptoAmodW})$ by mapping $g\in G(R)$ to the coefficients of this polynomial. 
It is easy to check that this morphism is invariant under $\phi$-conjugation on $G$ and hence we get morphisms
\begin{equation}\label{DnunachAmodW}
\begin{aligned}
\begin{xy}\xymatrix{
 G\times\Gr_{K,\nu}\ar[d]\ar[r]^{\widetilde{\alpha}} & (A/W)_E\ar[d]^=\\
 \Dcal_\nu\ar[r]^\alpha &  (A/W)_E,
}\end{xy}
\end{aligned}
\end{equation}
where $\Dcal_\nu$ is the stack-quotient
\[\Dcal_\nu= (G_E\times\Gr_{K,\nu})/ G\]
on the category of $E$-schemes, where the action of $G$ on $G_E\times\Gr_{K,\nu}$ is the same as in $(\ref{phiconjaction})$. Here $\nu$ is a cocharacter as in $(\ref{cocharnu})$ and $E$ is the reflex field of $\nu$.
We also write $\alpha$ and $\widetilde{\alpha}$ for the analytification of these morphisms.
\begin{theo}\label{thmscheme}
Let $x\in (A/W)^{\ad}$ and $\nu$ be a cocharacter as in $(\ref{cocharnu})$.  
Then there exists a quasi-projective scheme $X$ over some finite extension $F$ of $\Q_p$ inside $k(x)$ such that the weakly admissible locus in the fiber over $x$ is given by
\[\widetilde{\alpha}^{-1}(x)^{\rm wa}=X^{\ad}\otimes_F k(x),\]
\end{theo}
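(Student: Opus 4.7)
\emph{Strategy.} The central observation is that fixing $x \in (A/W)^{\ad}$ determines the Newton polygon $\mathcal{N}$ of any isocrystal in $\Isoc(k)_{k(x)}$ lying over $x$, and that $\mathcal{N}$ is discrete data defined over a finite extension $F \subset k(x)$ of $\Q_p$. Once $\mathcal{N}$ is fixed, the $\Phi$-stable subobjects of the isocrystal admit an algebraic parametrisation by Grassmannians over the endomorphism algebras of the standard pure-slope isocrystals. Consequently the weak admissibility condition, which is a priori only analytically open, becomes Zariski open once the Newton polygon is fixed; combined with a Dieudonn\'e--Manin descent this identifies the wa-fiber with the analytification of a quasi-projective $F$-scheme, base changed to $k(x)$.

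\emph{Plan of attack.} First I would apply Proposition~\ref{slopedecomp} to extract $\mathcal{N}$ from $x$ and fix a standard representative isocrystal $(D_0, \Phi_0)$ over a finite extension $F\subset k(x)$ of $\Q_p$ -- chosen to contain $E\cdot K_0$ and to split the endomorphism algebras of the pure-slope pieces of $\mathcal{N}$. This gives $g_0\in G(F)$ with $\phi$-centraliser $J_{g_0}$, an affine $F$-group. Second, by Dieudonn\'e--Manin the fiber $\widetilde\alpha^{-1}(x)$ is a single $\phi$-conjugation orbit over $\overline{k(x)}$, isomorphic to $G/J_{g_0}$; via an fpqc/Galois descent patterned on the proof of Proposition~\ref{slopedecomp}, this realises $\widetilde\alpha^{-1}(x)$ as the base change to $k(x)$ of an $F$-form of $(G/J_{g_0})\times \Gr_{K,\nu, F}$. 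Third, I would cut out the wa-locus by algebraic conditions: for each of the finitely many Newton sub-polygons $\mathcal{N}'\subseteq \mathcal{N}$, the $\Phi_0$-stable subobjects of $(D_0, \Phi_0)$ of type $\mathcal{N}'$ are parametrised by a projective $F$-scheme (a product of Grassmannians over the pertinent endomorphism algebras), and the weak admissibility inequality
\[ \textstyle\sum_i i\cdot \dim_F\gr_i(\Fcal^\bullet\cap D'_K) \leq v_p(\dete_{K_0}\Phi_0|_{D'}), \]
whose right-hand side is a constant depending only on $\mathcal{N}'$, is upper semi-continuous in $(D',\Fcal^\bullet)$. Properness of the parameter schemes then forces the intersection of these finitely many open conditions to be a Zariski open subscheme $X$, and Corollary~\ref{waextstab} yields the desired identification $\widetilde\alpha^{-1}(x)^{\rm wa} = X^{\ad}\otimes_F k(x)$.

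\emph{Main obstacle.} The delicate point is the descent step: the fiber $Y_x = G\times_{A/W}\operatorname{Spec} k(x)$ depends on the full characteristic polynomial $P_x\in k(x)[T]$, whose coefficients typically lie outside any small $F\subset k(x)$, so $Y_x$ does not literally descend to $F$. The resolution is that over $\overline{k(x)}$ the Dieudonn\'e--Manin classification identifies $Y_x$ with $G/J_{g_0}$, which \emph{is} defined over $F$; the identification $Y_x \cong (G/J_{g_0})\otimes_F k(x)$ is then obtained by a Galois/fpqc descent realising $Y_x$ as an $F$-form of $G/J_{g_0}$ by a cocycle valued in $J_{g_0}$. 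One must further verify that this descent is compatible both with the weak admissibility condition and with the algebraic parametrisation of $\Phi$-stable subobjects described above, which constitutes the technical heart of the argument.
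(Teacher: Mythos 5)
Your proposal rests on a claim that fails in general: you assert that ``by Dieudonn\'e--Manin the fiber $\widetilde\alpha^{-1}(x)$ is a single $\phi$-conjugation orbit over $\overline{k(x)}$, isomorphic to $G/J_{g_0}$.'' This is false. In this setting, once the coefficient field $F$ contains $K_0$ (and in particular for $F=\overline{k(x)}$), an isocrystal $(D,\Phi)$ with coefficients in $F$ is, up to isomorphism, the data of a conjugacy class of the $F$-linear automorphism $\Phi^f|_{V_{\psi_0}}\in\GL_d(F)$; by Jordan normal form, the conjugacy class is \emph{not} determined by the characteristic polynomial when eigenvalues repeat. Thus whenever $x\in A/W$ has coinciding eigenvalues, $\widetilde\alpha^{-1}(x)$ is a union of several $\phi$-conjugation orbits, not one, and the proposed identification with a homogeneous space $G/J_{g_0}$ breaks down. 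Separately, the appeal to Dieudonn\'e--Manin is out of place: that classification is for isocrystals over a perfect residue field with \emph{algebraically closed} residue field and no coefficient ring, and it classifies by Newton polygon precisely in that situation; here the residue field of $K_0$ is finite, and the coefficients carry a nontrivial valuation. Parametrising $\Phi_0$-stable subobjects of a single fixed $(D_0,\Phi_0)$ therefore cannot capture the whole fiber.

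The paper circumvents both difficulties by never attempting to rigidify the isocrystal. For a rigid point $x$ it constructs, for each rank $i$, a proper scheme $Z_i\subset G\times\Gr_{K_0,i}$ whose points are \emph{pairs} $(g,U)$ with $U$ stable under $g(\id\otimes\phi)$ (so the Frobenius varies along with the subobject), and shows that the invariant function $f_i(g,U)=\dete((g(\id\otimes\phi))^f|_U)$ takes only finitely many values on $\widetilde\alpha^{-1}(x)$ because each value is a product of the (fixed) eigenvalues of $P_x$. The non--weakly-admissible locus is then the union of the images of finitely many proper maps from open-closed pieces $S_{i,m}$ of these $Z_i\times\Gr_{K,\nu}$, hence closed, giving a quasi-projective complement. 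For a general adic point, the paper picks a rigid point $y$ whose eigenvalue valuations lie in the \emph{same} position relative to the thresholds $\tfrac{n}{ef}$ as those of $x$, observes that the weak admissibility conditions become literally identical after base change to $L=k(x)\cdot k(y)$, and descends to $F=k(x)\cap k(y)$. Your ``main obstacle'' paragraph correctly senses that the characteristic polynomial $P_x$ does not descend to a small $F$, but the resolution you propose (a $J_{g_0}$-valued cocycle twisting $G/J_{g_0}$) is not available because the fiber is not a single orbit; the paper's resolution is the comparison with a rigid proxy point $y$, not an inner-form argument.

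Your semi-continuity idea for cutting out the weak admissibility condition is sound and indeed mirrors what the paper does inside the $Z_i$-picture; but it needs to be run on the moduli space of pairs $(g,U,\Fcal^\bullet)$ rather than on Grassmannians of subobjects of a single fixed isocrystal, and the descent step must be replaced by the rigid-point comparison.
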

We first prove the theorem for rigid analytic points of $(A/W)^{\ad}$. This is the following proposition.
\begin{prop}\label{maintheoforrigpt}
Let $x\in (A/W)^{\ad}$ be a rigid analytic point, then there exists a quasi-projective scheme over $k(x)$ such that
\[\widetilde{\alpha}^{-1}(x)^{\rm wa}=X^{\rm ad}.\]
\end{prop}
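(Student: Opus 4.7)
The plan is to show that the fiber $\widetilde\alpha^{-1}(x)$ admits an algebraic model over $k(x)$, and that the weakly admissible locus is cut out by a Zariski closed bad locus inside this model. Since the map $\widetilde\alpha$ of $(\ref{DnunachAmodW})$ is algebraic, its scheme-theoretic fiber
\[Y=\bigl\{g\in G_{k(x)}\bigm|\det(T-\Phi_g^f)=P(T)\bigr\}\times_{k(x)}\Gr_{K,\nu,k(x)},\]
where $P(T)\in k(x)[T]$ is the polynomial representing $x$, is a closed subscheme of $G_{k(x)}\times\Gr_{K,\nu,k(x)}$, hence quasi-projective over $k(x)$, and its adification coincides with $\widetilde\alpha^{-1}(x)^{\ad}$. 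It therefore suffices to produce a Zariski open subscheme $X\subset Y$ whose adification equals $\widetilde\alpha^{-1}(x)^{\rm wa}$.

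The crucial input is that, since $x$ is fixed, the characteristic polynomial $P(T)$ of $\Phi_g^f$ is constant on $Y$. After passing to a finite Galois extension $L/k(x)$ splitting $P(T)$, the generalized eigenspace decomposition of $\Phi_g^f$ becomes algebraic in $g$: each generalized eigenspace is the kernel of $(\Phi_g^f-\alpha)^N$ for a root $\alpha$ of $P(T)$, and its rank is locally constant because the multiplicity of $\alpha$ in $P$ is fixed. Grouping these summands by $v_F(\alpha)$ realises the slope decomposition $D=\bigoplus_\lambda D_\lambda$ of the universal isocrystal as a decomposition into $F\otimes K_0$-subbundles over $Y_L$; since the Galois action preserves $v_F$, each individual $D_\lambda$ is Galois invariant and descends to $Y$, exactly as in Step~2 of the proof of Proposition~\ref{slopedecomp}. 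Consequently, for any $\Phi_g$-stable $F\otimes_{\Q_p}K_0$-submodule $D'\subset D$ the slope $\mu(D')$ is determined by the discrete numerical datum
\[\tau=\bigl((n_\lambda)_\lambda,(d_j)_j\bigr),\qquad n_\lambda=\rk_{F\otimes K_0}(D'\cap D_\lambda),\quad d_j=\dim_F\gr_j(\Fcal^\bullet|_{D'_K}),\]
which ranges over a finite set.

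For each numerical type $\tau$ with $\mu(\tau)<1$, I form the incidence scheme $Z_\tau\subset Y\times\Gr(fd',D)$ over $Y$ (with $d'=\sum_\lambda n_\lambda$) parametrising quadruples $((g,\Fcal^\bullet),D')$ such that $D'$ is $F\otimes K_0$-stable, $\Phi_g$-stable, and satisfies the inequalities $\rk(D'\cap D_\lambda)\geq n_\lambda$ together with $\dim(D'_K\cap\Fcal^j)\geq\sum_{j'\geq j}d_{j'}$. All defining conditions are Zariski closed: the stability conditions are polynomial, while the rank and dimension inequalities are upper semi-continuous. Hence $Z_\tau$ is closed in $Y\times\Gr(fd',D)$ and projective over $Y$, so its image $B_\tau\subset Y$ is closed. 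Setting
\[X=Y\setminus\bigcup_{\tau:\,\mu(\tau)<1}B_\tau\]
(a finite union) yields the desired open quasi-projective subscheme: a rigid analytic point $y\in Y^{\ad}$ lies in $X^{\ad}$ if and only if the fibre at $y$ admits no $\Phi$-stable subspace with $\mu(D')<1$, i.e.\ if and only if the filtered isocrystal at $y$ is weakly admissible.

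The main obstacle is the algebraicity of the slope decomposition used in the second paragraph. \emph{A~priori} the slope filtration involves the valuation $v_F$ in an essential way, but fixing $x$ pins down the eigenvalues of $\Phi_g^f$ and replaces the slope decomposition by the generalised eigenspace decomposition, which is algebraic after a finite base change splitting $P(T)$; Galois descent then returns everything to $k(x)$. Once this algebraicity is in place, the remaining ingredients---the incidence correspondence, properness of Grassmannians, and the bookkeeping that only finitely many numerical types arise---are comparatively routine.
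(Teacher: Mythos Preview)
Your argument is correct, and the overall architecture---build an incidence scheme over the fiber parametrising $\Phi$-stable submodules with prescribed numerical invariants, then project via a proper map to a closed bad locus---is the same as the paper's. The organisation differs in one interesting respect. The paper does \emph{not} construct the slope decomposition as a family of subbundles; instead it works directly with the global function $f_i(g,U)=\det(\Phi_g^f|_U)$ on the incidence scheme $Z_i$ of rank-$i$ $\Phi$-stable submodules and observes that, since the characteristic polynomial is fixed at $x$, every value of $f_i$ is a product of some of the finitely many roots $\lambda_1,\dots,\lambda_d$. Hence the valuation of $f_i$ is locally constant, and the condition $|f_i|<p^{-f^2m}$ cuts out a union of connected components of the Hodge-stratum $Y_{i,m}$. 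This is a shortcut: it bypasses your generalised-eigenspace construction and Galois descent, replacing them by the single observation that $f_i$ takes finitely many values on the fiber.

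Your route via the algebraicity of the generalised-eigenspace decomposition is more structural and perhaps more conceptual---it makes explicit \emph{why} the Newton slope becomes a discrete invariant once $x$ is fixed---but it is also longer, and the descent step, while valid (the valuation on a finite extension of $\Q_p$ is unique, so Galois preserves it), is not strictly needed. One small remark: in your incidence scheme $Z_\tau$ you impose $\rk(D'\cap D_\lambda)\ge n_\lambda$ together with the fixed total rank $d'=\sum_\lambda n_\lambda$; these together force equality $\rk(D'\cap D_\lambda)=n_\lambda$, which is what pins down the Newton slope exactly. It is worth saying this explicitly, since otherwise a reader might worry that the closure could contain $D'$ of a different Newton type.
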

\begin{proof}
The proof will be similar to the proof of \cite[Theorem 4.1]{Hellmann}.

Let $x=(c_1,\dots,c_d)\in k(x)^{d-1}\times k(x)^\times$ and let $v_x$ denote that additive valuation on $k(x)$.
First note that $\widetilde{\alpha}^{-1}(x)=\emptyset$ unless 
\[\tfrac{1}{f}v_x(c_d)=\sum_{j\in\Z} j\,\dim\ \gr_j\Fcal^\bullet,\]
where $\Fcal^\bullet$ is the universal filtration on $\Gr_{K,\nu}$.
In the following we will assume that this condition is satisfied.
For $i\in\{0,\dots,d\}$, consider the following functor on the category of $\Q_p$-schemes,
\begin{equation*}
 S\longmapsto
\left\{
{\begin{array}{*{20}c}
 \Ecal\subset \Ocal_S\otimes_{\Q_p}V_0\ \text{locally free}\ \Ocal_S\otimes_{\Q_p}K_0\text{-submodule} \\
\text{of rank}\ i\ \text{that is locally on}\ S\ \text{a direct summand}
\end{array}}
\right\}.
\end{equation*}
This functor is representable by a projective $\Q_p$-scheme $\Gr_{K_0,i}$.\\
We let $G=\Res_{K_0/\Q_p}\GL_d$ act on $\Gr_{K_0,i}$ in the following way: for a $\Q_p$-scheme $S$, let $A\in G(S)$ and $\Ecal\in\Gr_{K_0,i}(S)$. We get a linear endomorphism $A$ of $\Ocal_S\otimes_{\Q_p}V_0$ and define the action of $A$ on $\Ecal$ by
\[
 A\cdot\Ecal=A((\id\otimes\phi)(\Ecal)).
\]
Write
\[
a:G\times\Gr_{K_0,i}\longrightarrow \Gr_{K_0,i}
\]
for this action and consider the subscheme $Z_i\subset G\times\Gr_{K_0,i}$ defined by the following fiber product:
\begin{align*}
 \begin{xy}
  \xymatrix{
Z_i\ar[d]\ar[rr] && G\times\Gr_{K_0,i}\ar[d]^{a\times\id} \\
\Gr_{K_0,i}\ar[rr]^{\hspace{-5mm}\Delta} && \Gr_{K_0,i}\times\Gr_{K_0,i}.
}
 \end{xy}
\end{align*}
An $S$-valued point $x$ of the scheme $Z_i$ is a pair $(g_x,U_x)$, where $g_x\in G(S)$ is a linear automorphism of $\Ocal_S\otimes_{\Q_p}V_0$ and $U_x$ is an $\Ocal_S\otimes_{\Q_p}K_0$-submodule of rank $i$ stable under $\Phi_x=g_x(\id\otimes\phi)$. The scheme $Z_i$ is projective over $G$ via the first projection
\[\pr_i:Z_i\longrightarrow G.\]
Further we denote by $f_i\in\Gamma(Z_i,\Ocal_{Z_i})$ the global section defined by 
\[
 f_i(g_x,U_x)=\dete ((g_x(\id\otimes\phi))^f|_{U_x})
\]
(recall $f=[K_0:\Q_p]$).
We also write $f_i$ for the global section on the associated adic space $Z_i^{\ad}$.\\
We write $\Ecal$ for the pullback of the universal bundle on $Z_i$ to $Z_i\times \Gr_{K,\nu}$ and $\Fcal^\bullet$ for the pullback of the universal filtration on $\Gr_{K,\nu}$. Then the fiber product 
\[\Gcal^\bullet=(\Ecal\otimes_{K_0}K)\cap \Fcal^\bullet\]
is a filtration of $\Ecal\otimes_{K_0}K$ by coherent sheaves. By the semi-continuity theorem the function
\[h_i: x\longmapsto \sum_{i\in\Z}i\ \tfrac{1}{ef}\dim_{\kappa(x)}\gr_i\ \Gcal^\bullet\]
is upper semi-continuous on $Z_i\times \Gr_{K,\nu}$ and hence so is 
\[h_i^{\ad}: x\longmapsto \sum_{i\in\Z}i\ \tfrac{1}{ef}\dim_{k(x)}\gr_i\ (\Gcal^\bullet)^{\ad}.\]
For $m\in\Z$ we write $Y_{i,m}\subset Z_i\times \Gr_{K,\nu}$ (resp. $Y_{i,m}^{\ad}\subset Z_i^{\ad}\times\Gr_{K,\nu}^{\ad}$) for the closed subscheme (resp. the closed adic subspace) 
\begin{align*}
Y_{i,m}&= \{y\in Z_i\times \Gr_{K,\nu}\mid h_i(y)\geq m\},\\
Y_{i,m}^{\ad} &= \{y\in Z_i^{\ad}\times\Gr_{K,\nu}^{\ad}\mid h_i^{\ad}(y)\geq m\}.
\end{align*} 
Then the definitions imply that
\begin{align*}
\pr_{i,m}:Y_{i,m}&\longrightarrow G\times\Gr_\nu\\
\pr_{i,m}:Y_{i,m}^{\ad}&\longrightarrow (G\times\Gr_\nu)^{\ad}
\end{align*}
are proper morphism. Now
\[S_{i,m}=\{y=(g_y,U_y,\Fcal^\bullet_y)\in Y_{i,m}\times_{(G\times\Gr_{K,\nu})}\widetilde{\alpha}^{-1}(x)\mid |f_i(g_y,U_y)|<p^{-f^2m}\}\]
is a union of connected components of $Y_{i,m}$: Let $\lambda_1,\dots,\lambda_d$ denote the zeros of the polynomial
\[X^d+c_1X^{d-1}+\dots+c_{d-1}X+c_d.\]
Then every possible value of the $f_i$ is a product of some of the $\lambda_i$ and hence $f_i$ can take only finitely many values.

We conclude that the subset $\bigcup_{i,m} \pr_{i,m}(S_{i,m})$ is closed and claim that 
\[\widetilde{\alpha}^{-1}(x)^{\rm wa}=\big(\widetilde{\alpha}^{-1}(x)\backslash \bigcup_{i,m} \pr_{i,m}(S_{i,m})\big) ^{\ad}.\]
Indeed, let $z=(g_z,\Fcal^\bullet_z)\in\widetilde{\alpha}^{-1}(x)\subset G^{\rm ad}\times\Gr_{K,\nu}^{\rm ad}$. Then the object 
\[(k(z)\otimes V_0,g_z(\id\otimes\phi),\Fcal_z^\bullet)\]
is not weakly admissible if and only if there exists a $g_z(\id\otimes\phi)$-stable subspace $U_z\subset k(z)\otimes V_0$ of some rank, violating the weak admissibility condition.
This means $z\in\bigcup_{i,m} \pr_{i,m}(S_{i,m})^{\ad}$. Here we implicitly use that fact that weak admissibility is stable under extension of scalars (see Corollary $\ref{waextstab}$).\\ 
\end{proof}
\begin{proof}[Proof of Theorem $\ref{thmscheme}$]
As in the proof of the preceding Proposition we write $x=(c_1,\dots,c_d)\in k(x)^{d-1}\times k(x)^\times$ and denote by $\lambda_1,\dots,\lambda_d\in\Gamma_x\otimes\Q$ the valuations of the zeros of the polynomial 
\[X^d+c_1X^{d-1}+\dots+c_{d-1}X+c_d\]
in some algebraic extension of $k(x)$. Then there exists a rigid analytic point $y\in(A/W)^{\rm ad}(\bar\Q_p)$ such that if we write $\lambda'_1,\dots,\lambda'_d$ 
for the valuations of the associated polynomial we have
\begin{align*}
\tfrac{n-1}{ef} < \prod_{i\in I}\lambda_i <\tfrac{n}{ef} & \Leftrightarrow \tfrac{n-1}{ef}<\prod_{i\in I}\lambda'_i<\tfrac{n}{ef} \\
\tfrac{n}{ef}=\prod_{i\in I}\lambda_i & \Leftrightarrow \tfrac{n}{ef}=\prod_{i\in I}\lambda'_i.
\end{align*}
for all $n\in\Z$ and all $I\subset \{1,\dots,d\}$. This follows from the fact that the set of valuations of finite extensions of $\Q_p$ is dense in $\mathbb{R}$.
Let $L$ denote the composite of $k(x)$ and $k(y)$ and write $F$ for their intersection inside $L$. Then by construction
\begin{align*}
\widetilde{\alpha}^{-1}(x)^{\rm wa}\otimes_{k(x)}L & \subset \Gr_{K,\nu}\otimes_EL\ \text{and}\\
\widetilde{\alpha}^{-1}(y)^{\rm wa}\otimes_{k(y)}L & \subset \Gr_{K,\nu}\otimes_EL
\end{align*}
are defined by exactly the same condition and both are quasi-projective schemes (compare the proof of the proposition above).
It follows that $\widetilde{\alpha}^{-1}(x)^{\rm wa}$ is defined over $F$.
\end{proof}
\begin{rem}
In view of the period domains considered in \cite{RapoZink} it can be surprising that this weakly admissible locus is indeed the adification of a scheme, not just an analytic space. The main reason is the following: In \cite{RapoZink} the isocrystal is fixed and the counter examples one has to exclude for the weak admissibility condition are parametrized by the $\Q_p$-valued points of an algebraic variety. In our setting the isocrystal is not fixed and the Frobenius $\Phi$ may vary. Hence the set of counter examples is the algebraic variety itself rather than its $\Q_p$-valued points.
\end{rem}
\begin{cor}
Let $x\in (A/W)^{\rm ad}$ and consider the $2$-fiber product
\[\begin{xy}\xymatrix{
\alpha^{-1}(x)^{\rm wa} \ar[r]\ar[d] & \Dfrak_\nu^{\rm wa}\ar[d]\\
x \ar[r] & (A/W)^{\rm ad}.
}
\end{xy}\]
Then there exists a finite extension $F$ of $\Q_p$ inside $k(x)$ and an Artin stack in schemes $\mathfrak{A}$ over $F$ such that
\[\alpha^{-1}(x)^{\rm wa}\cong\mathfrak{A}^{\ad}\otimes_Fk(x).\]
\end{cor}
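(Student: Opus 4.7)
The plan is to combine Theorem~$\ref{thmscheme}$ with the presentation of $\Dfrak_\nu$ as a stack quotient to realize $\alpha^{-1}(x)^{\rm wa}$ as the adification of a quotient stack in schemes. By construction, the morphism $\alpha$ factors through $\widetilde\alpha$ via the quotient map $X_\nu\to \Dfrak_\nu$, where $\Dfrak_\nu$ is the quotient of $X_\nu=(\Res_{K_0/\Q_p}\GL_d)_E^{\rig}\times \Gr_{K,\nu}^{\rig}$ by the $\phi$-conjugation action in $(\ref{phiconjaction})$. Since this action preserves the characteristic polynomial of $\Phi^f$, it also preserves the fiber $\widetilde\alpha^{-1}(x)$, and since weak admissibility is invariant under isomorphism of filtered $\phi$-modules (Corollary~$\ref{waextstab}$), it preserves $\widetilde\alpha^{-1}(x)^{\rm wa}$ as well. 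Hence the $2$-fiber product in the statement is canonically presented as the adic stack quotient
\[
\alpha^{-1}(x)^{\rm wa}=\bigl[\widetilde\alpha^{-1}(x)^{\rm wa}\,/\,G_{k(x)}^{\ad}\bigr],
\]
where $G=\Res_{K_0/\Q_p}\GL_d$.

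Next, I would apply Theorem~$\ref{thmscheme}$ to produce a finite extension $F/\Q_p$ inside $k(x)$ and a quasi-projective scheme $X$ over $F$ with $\widetilde\alpha^{-1}(x)^{\rm wa}=X^{\ad}\otimes_F k(x)$. Because the $\phi$-conjugation action of $G$ on $G_E\times \Gr_{K,\nu}$ is an algebraic action defined over $E\subset F$, it restricts to an algebraic action of $G_F$ on the scheme-theoretic fiber $\widetilde\alpha^{-1}(x)$ (formed over $F$ after enlarging $F$ so that the characteristic polynomial in question has coefficients in $F$, which is possible by the construction of $F$ in the proof of Theorem~$\ref{thmscheme}$). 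The open/closed recipe defining $X$ inside $\widetilde\alpha^{-1}(x)$ is cut out by the weak admissibility condition, which is $G$-stable as noted above; hence $X$ is $G_F$-stable, and we may form the Artin stack quotient
\[
\mathfrak{A}=[X/G_F].
\]
This is an Artin stack in schemes over $F$ because $G_F$ is an algebraic group of finite type acting on the quasi-projective $F$-scheme $X$.

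Finally, the identification $\alpha^{-1}(x)^{\rm wa}\cong \mathfrak{A}^{\ad}\otimes_F k(x)$ will follow from the compatibility of analytification with quotient stacks: for a finite-type action of an algebraic group $H$ on an $F$-scheme $Y$ one has $[Y/H]^{\ad}=[Y^{\ad}/H^{\ad}]$, which is verified by applying analytification to a smooth atlas $Y\to [Y/H]$ and using that analytification commutes with fiber products of finite-type schemes. Applied to $(Y,H)=(X,G_F)$ and combined with the presentation from the first paragraph, this yields the required isomorphism after base change to $k(x)$.

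The main obstacle is the bookkeeping in the middle step: one must verify that both the scheme $X$ and the $G$-action on it genuinely descend to the same field $F$, so that the quotient stack lives over $F$ rather than only over $k(x)$. This descends to the fact that the $G$-action is already defined over $E$, while $X$ is defined over $F$ by Theorem~$\ref{thmscheme}$, and the two are compatible because the action preserves the semi-continuity data $h_i$ and the numerical conditions $|f_i|<p^{-f^2m}$ appearing in the proof of that theorem.
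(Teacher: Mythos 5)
Your proof is correct in substance and follows the same approach the paper intends: the paper's proof of this corollary is a single sentence ("immediate consequence of Theorem~\ref{thmscheme}"), and the stated Theorem~\ref{maintheo1} in the introduction already reveals that $\mathfrak{A}$ is meant to be the stack quotient of the quasi-projective $F$-variety from Theorem~\ref{thmscheme} by the $\phi$-conjugation action of $G=\Res_{K_0/\Q_p}\GL_d$, which is exactly what you do. Two small inaccuracies worth noting: the invariance of weak admissibility under $\phi$-conjugation is simply isomorphism-invariance within $\Fil\Isoc(k)_{k(z)}^K$ and is not what Corollary~\ref{waextstab} says (that result concerns extension and restriction of scalars along a field extension, which you do need elsewhere but not here); and your parenthetical claim that one may enlarge $F$ so that the characteristic polynomial $(c_1,\dots,c_d)$ has coefficients in $F$ is not justified by the construction in Theorem~\ref{thmscheme} and is in general impossible for a non-rigid point $x$, since $k(x)$ need not be algebraic over $\Q_p$. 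The correct reason the $G$-action descends to $F$ is rather that the open subscheme is cut out by conditions that are invariant under $\phi$-conjugation, and that the comparison with the auxiliary rigid point $y$ in the proof of Theorem~\ref{thmscheme} can be carried out $G$-equivariantly (the $G$-action being defined over $\Q_p$). Also, a minor wording slip: $\widetilde\alpha$ factors through $\alpha$ via the quotient map $X_\nu\to\Dfrak_\nu$, not the other way around. None of these affect the validity of the argument.
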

\begin{proof}
This is an immediate consequence of Theorem $\ref{thmscheme}$
\end{proof}
We end the discussion of the fibers over the adjoint quotient  by discussing three examples.
\begin{expl}\label{example1}
Let $K=\Q_p$ and $d=3$. We take $\Phi={\rm diag}(1,p,p^2)$ and fix the type of the filtration $\Fcal^\bullet$ such that
\[\dim \Fcal^i=\begin{cases} 3& i\leq 0 \\ 2 & i=1 \\ 1 & i=2 \\ 0 & i\geq 3. \end{cases}\]
We write $G=\GL_3$ and $B\subset G$ for the Borel subgroup of upper triangular matrices. Further $X=G/B$ is the full flag variety, and we are interested in the weakly admissible locus in $X$. One easily checks that
\[X^{\rm wa}=\{\Fcal^\bullet\in X\mid \Fcal^1\cap V_1=0,\ \text{and}\ \Fcal^2\not\subset V_{12}\},\]
where $0\subset V_1\subset V_{12}\subset \Q_p^3$ is the standard flag fixed by the Borel $B$. The subset $X^{\rm wa}$ is obviously stable under $B$ and, in fact,
\[X^{\rm wa}=Bw_0B/B,\]
where $w_0$ is the longest Weyl group element. 
\end{expl}
\begin{expl}\label{example2}
We use the same notations as in the example above, but this time $\Phi={\rm diag}(1,1,p^3)$. Then
\[X^{\rm wa}=\{\Fcal^\bullet\in X\mid \Fcal^1\cap V_{12}=0\}.\]
As $\dim V_{12}=\dim \Fcal^1=2$ it follows that $X^{\rm wa}=\emptyset$.
\end{expl}
\begin{expl}\label{example3}
In this example let $d=2$ and $\Phi={\rm diag}(1,p)$. Let $K$ be a ramified extension of $\Q_p$ of degree $e$ and consider flags of the type $(1,\dots,1)$, i.e. the cocharacter is defined over $\Q_p$, the only non-trivial filtration step is $\Fcal^1=(\Fcal^1_i)_{i=1\dots,e}$ and the base change of the flag variety $X=\Gr_{K,\nu}$ to $K$ is 
\[X_K=\Pbb_K^1\times\dots\times\Pbb_K^1.\]
The weakly admissible locus in $X_K$ is given by
\[X_K^{\rm wa}=\{\Fcal^\bullet=(\Fcal^\bullet)_i\mid \Fcal^1_i\neq \infty\ \text{for all}\ i\in\{1,\dots,e\}\}.\]
Let $G=\Res_{K/\Q_p}\GL_2$ and $B\subset G$ the Weil-restriction of the Borel subgroup of upper triangular matrices. Again we write $w_0$ for the longest Weyl group element of $G$. Then
\[X^{\rm wa}= Bw_0B/B\subset X=G/B.\]
\end{expl}

\section{Newton strata and weak admissibility}
The proof of Proposition $\ref{maintheoforrigpt}$ suggests that the weakly admissible locus in the fibers over a point in $A/W$ does only depend on the valuation of the zeros of the characteristic polynomial associated to the points of the adjoint quotient. Hence we want to extend the result that the fibers over the adjoint quotient are nice spaces to the pre-image of the Newton strata in the adjoint quotient. Here we work in the category of analytic spaces in the sense of Berkovich (see \cite{Berko}), as it is not obvious how to generalize the notion of Newton strata (as defined in \cite{Kottwitz}) to adic spaces. Though the weakly admissible locus is not a Berkovich spaces in general \cite[Example 4.4]{Hellmann}, we show that it becomes a Berkovich space, if we restrict ourselves to the pre-images of the Newton strata. Further we want to identify the image of the weakly admissible locus in the adjoint quotient with a (closed) Newton-stratum. As usual we will write $\rfH(x)$ for the residue field at a point $x$ in an analytic space and $X^{\an}$ for the analytic space associated to a scheme $X$.
\subsection{Newton strata}
We first need to recall more notations from \cite{Kottwitz}. We write $\afrak=X_\ast(A)\otimes_{\Z}\R$, and $\afrak_{\rm dom}\subset \afrak$ for the subset of dominant elements, i.e the elements $\mu\in\afrak$ such that $\langle\alpha_i,\mu\rangle\geq 0$ for all $i\in\{1,\dots,d-1\}$. For $c=(c_1,\dots,c_d)\in (A/W)^{\an}$ we write
\begin{equation}\label{defdc}
d_c=(-v_c(c_1),\dots, -v_c(c_d))\in\widetilde{\R}^{d-1}\times\R,
\end{equation}
where $v_c$ denotes the (additive) valuation on $\rfH(c)$ and $\widetilde{\R}=\R\cup\{-\infty\}$. Note that there is a sign in $(\ref{defdc})$, as Kottwitz uses a different sign convention.
For $a\in A^{\an}$ define $\nu_a\in\afrak$ by requiring
\begin{equation*}\label{nua}
\langle\lambda,\nu_a\rangle=-v_a(\lambda(a))
\end{equation*}
for all $\lambda\in X^\ast(A)$, where we write $v_a$ for the (additive) valuation on $\rfH(a)$. By \cite[Proposition 1.4.1]{Kottwitz} there is a continuous map $r:\afrak\rightarrow \afrak_{\rm dom}$ mapping $x\in\afrak$ to the dominant element with the smallest distance to $x$, and this map extends in a continuous way to $\widetilde{\R}^{d-1}\times\R$. Here $\afrak\subset \widetilde{\R}^{d-1}\times\R$ via the chosen identification $X_\ast(A)=\Z^d$. Then we find that $r(d_{c(a)})$ is the unique dominant element in the $W$-orbit of $\nu_a$.
This follows from \cite[Theorem 1.5.1]{Kottwitz} for all $a\in A(\bar\Q_p)$ and, for an arbitrary point, from the fact that $A(\bar\Q_p)$ is dense in $A^{\an}$ and the continuity of the construction.
\begin{defn}\label{defNstrata}
For $\mu\in\afrak_{\rm dom}$ we define 
\begin{align*}
 (A/W)_\mu&=\{c\in A/W\mid r(d_c)=\mu\}\\
 (A/W)_{\leq \mu}&=\{c\in A/W\mid r(d_c)\leq \mu\}.
\end{align*}
Here $"\leq"$ is the usual dominance order on dominant coweights.
We will call the first of these subsets the \emph{Newton stratum} defined by $\mu$ and the second the \emph{closed Newton stratum} defined by $\mu$.
\end{defn}
We need another description of these sets to identify them as analytic subspaces of the adjoint quotient.
\begin{prop} Let $\mu\in\afrak_{\rm dom}$ and $I_{\mu}=\{i\in\{1,\dots,d\}\mid \langle\alpha_i,\mu\rangle=0\}$. Then
\begin{align*}
(A/W)_\mu=\left\{c=(c_1\dots,c_d)\in (\Abb^{d-1}\times \Gbb_m)^{\an} \left | 
\begin{array}{*{20}c}
v_c(c_i)\geq -\langle\omega_i,\mu\rangle\ ,i\in I_\mu\\
v_c(c_i)= -\langle\omega_i,\mu\rangle\ ,i\notin I_\mu
 \end{array}\right.\right\}\\
(A/W)_{\leq \mu}=\left\{c=(c_1\dots,c_d)\in (\Abb^{d-1}\times \Gbb_m)^{\an} \left | 
\begin{array}{*{20}c}
v_c(c_i)\geq -\langle\omega_i,\mu\rangle\ ,i\neq d\\
v_c(c_d)= -\langle\omega_d,\mu\rangle.  \end{array}\right.\right\} 
\end{align*}
\end{prop}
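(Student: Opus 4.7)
The plan is to translate the condition $r(d_c)=\mu$ (and $r(d_c)\le\mu$) into a statement about the Newton polygon of the characteristic polynomial encoded by $c$. The discussion preceding Definition $\ref{defNstrata}$ identifies $r(d_c)$ with the dominant rearrangement $r(\nu_a)$ of the valuations of the diagonal entries of any lift $a \in A^{\an}$ of $c$ (after possibly enlarging $\rfH(c)$), so it suffices to analyse how the coefficients $c_k$ are controlled by the $v(a_i)$.

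Fix such a lift ordered so that $v(a_1)\le v(a_2)\le\cdots\le v(a_d)$ and set $\mu' := r(\nu_a)$, so that $\mu'_i=-v(a_i)$. Vieta's formulas give $c_k=(-1)^k e_k(a_1,\ldots,a_d)$, and the ultrametric inequality yields
\[v_c(c_k)\ \ge\ \min_{|S|=k}\sum_{i\in S}v(a_i)\ =\ \sum_{i=1}^{k}v(a_i)\ =\ -\langle\omega_k,\mu'\rangle.\]
This minimum is attained by a \emph{unique} subset $S=\{1,\dots,k\}$ precisely when $v(a_k)<v(a_{k+1})$, i.e. when $\mu'_k>\mu'_{k+1}$, i.e. when $k\notin I_{\mu'}$; in that case no cancellation among the monomials of $e_k$ can occur at the minimal valuation, forcing equality. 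For $k=d$ the identity $c_d=\pm\prod a_i$ also gives equality. One concludes that the lower convex hull of the points $(k,v_c(c_k))$ is the convex piecewise-linear polygon $\Pi_{\mu'}\colon k\mapsto-\langle\omega_k,\mu'\rangle$.

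With this Newton-polygon identification, the first claim $c\in(A/W)_\mu\Leftrightarrow \mu'=\mu\Leftrightarrow \Pi_{\mu'}=\Pi_\mu$ translates directly into the two conditions of the proposition: equality $v_c(c_k)=-\langle\omega_k,\mu\rangle$ at the vertices of $\Pi_\mu$ (which are exactly the $k\notin I_\mu$, together with $k=d$) and the weaker inequality at the remaining indices in $I_\mu$ (which is forced by convexity of $\Pi_\mu$). For the closed Newton stratum, the dominance order $\mu'\le\mu$ on dominant coweights of $\GL_d$ is equivalent to $\langle\omega_k,\mu'\rangle\le\langle\omega_k,\mu\rangle$ for $k=1,\dots,d-1$ together with the equality $\langle\omega_d,\mu'\rangle=\langle\omega_d,\mu\rangle$. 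Combined with the formulas $v_c(c_k)\ge -\langle\omega_k,\mu'\rangle$ and $v_c(c_d)=-\langle\omega_d,\mu'\rangle$ established above, this is exactly the system of inequalities in the second identity.

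The technical heart of the argument is the uniqueness of the minimising subset $S$ that upgrades the ultrametric inequality to the equality $v_c(c_k)=-\langle\omega_k,\mu'\rangle$ for $k\notin I_{\mu'}$; once this Newton-polygon identification is secured, the remainder is bookkeeping between the combinatorics of the dominance order and the pairings $\langle\omega_k,\cdot\rangle$.
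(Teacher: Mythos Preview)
Your argument is correct and takes a genuinely different route from the paper. The paper's proof is a two-line appeal to \cite[Theorem~1.5.2]{Kottwitz} for the $\bar\Q_p$-points of $(A/W)^{\an}$, followed by density of these points and continuity of the defining conditions. By contrast, you give a direct Newton-polygon computation: Vieta's formulas and the ultrametric inequality identify $r(d_c)=\mu'$ with the slope vector of the Newton polygon of the characteristic polynomial, and the conditions on the $v_c(c_k)$ are then read off combinatorially. Your approach is in effect a self-contained proof of the relevant case of Kottwitz's theorem rather than a citation of it; this has the advantage of being elementary and of working uniformly at all analytic points, so no separate density step is needed for the proposition itself (though you still rely on the preceding identification $r(d_c)=r(\nu_a)$, which in the paper does use density).

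One expository point worth tightening: in the converse direction for $(A/W)_{\leq\mu}$ you pass from the hypotheses $v_c(c_k)\geq -\langle\omega_k,\mu\rangle$ and $v_c(c_d)=-\langle\omega_d,\mu\rangle$ to $\mu'\leq\mu$, but this does not follow from the bare inequality $v_c(c_k)\geq -\langle\omega_k,\mu'\rangle$ alone (that inequality goes the wrong way). What closes the gap is precisely your Newton-polygon identification: since $\Pi_\mu$ is convex (because $\mu$ is dominant), lies below all the points $(k,v_c(c_k))$, and shares the endpoints $(0,0)$ and $(d,v_c(c_d))$ with $\Pi_{\mu'}$, the lower convex hull $\Pi_{\mu'}$ must lie on or above $\Pi_\mu$, whence $\langle\omega_k,\mu'\rangle\leq\langle\omega_k,\mu\rangle$ for all $k$. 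Making this step explicit would complete the argument cleanly.
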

\begin{proof}
For all points in $(A/W)^{\an}(\bar\Q_p)$ this follows from \cite[Theorem 1.5.2]{Kottwitz}. Again the proposition follows from continuity, and the fact that the points in $(A/W)^{\an}(\bar\Q_p)$ are dense in $(A/W)^{\an}$.
\end{proof}
The category of (strict) analytic spaces is a full subcategory of the category of adic spaces locally of finite type. Hence we can restrict the stacks $\Dfrak_\nu$ and $\Dfrak_\nu^{\rm wa}$ to the category of analytic spaces. We write again $\Dfrak_\nu$ and $\Dfrak_\nu^{\rm wa}$ for these restrictions. Further we write $\widetilde{\alpha}^{\an}$ (resp. $\alpha^{\an}$) of the analytifications of the  morphisms defined in $(\ref{DnunachAmodW})$.
\begin{theo}
Let $\nu$ be a cocharacter as in $(\ref{cocharnu})$ and $\mu\in\afrak_{\rm dom}$. Then the weakly admissible locus in the inverse image $(\widetilde{\alpha}^{\an})^{-1}((A/W)^{\an}_\mu)$ is an analytic space.
\end{theo}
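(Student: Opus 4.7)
The plan is to refine the argument from the proof of Proposition \ref{maintheoforrigpt} by exploiting the fact that on the preimage of a Newton stratum the norm $|f_i|$ takes only finitely many values. Recall from that proof that the weakly admissible locus in a fibre $\widetilde{\alpha}^{-1}(x)$ is the complement of $\bigcup_{i,m} \pr_{i,m}(S_{i,m})$, where $Y_{i,m} \subset Z_i \times \Gr_{K,\nu}$ carries the global section $f_i = \dete(\Phi_g^f|_{U})$, the morphism $\pr_{i,m}$ is proper, and $S_{i,m}$ is cut out by the strict inequality $|f_i| < p^{-f^2 m}$. The obstruction to the full weakly admissible locus being a Berkovich analytic space comes precisely from this strict inequality with a threshold $p^{-f^2 m}$ which need not match a value actually attained by $|f_i|$.

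The key point is that on the preimage of $(A/W)^{\an}_\mu$ the values of $v(f_i)$ are quantised. Indeed, for any analytic point $y = (g_y,U_y) \in Z_i^{\an}$ mapping into the Newton stratum, $U_y$ is a $\Phi_{g_y}^f$-stable rank-$i$ subspace of $V_0$, so the eigenvalues of $\Phi_{g_y}^f|_{U_y}$ form a size-$i$ subset of the eigenvalues of $\Phi_{g_y}^f$ on the full $V_0$. By the definition of the Newton stratum and the construction of the morphism $G \to A/W$ recalled in (\ref{maptoAmodW}), those latter eigenvalues have fixed additive valuations $\mu_1,\ldots,\mu_d$. Hence $v(f_i(y))$ lies in the finite set
$\Lambda_{i,\mu} = \bigl\{\sum_{j\in J} \mu_j \bigm| J \subset \{1,\ldots,d\},\ |J|=i\bigr\}$.
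It follows that, on the preimage of the Newton stratum, the strict inequality $|f_i| < p^{-f^2 m}$ is equivalent to the closed analytic condition $|f_i| \leq p^{-\lambda^*}$, where $\lambda^*$ is the smallest element of $\Lambda_{i,\mu}$ strictly greater than $f^2 m$ (and the condition is empty if no such element exists).

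From there the argument concludes quickly. For each $(i,m)$ the above condition defines a closed analytic subspace of $Y_{i,m}^{\an}$ intersected with the preimage of the Newton stratum, and its image under the proper morphism $\pr_{i,m}^{\an}$ is a closed analytic subspace of $(G \times \Gr_{K,\nu})^{\an}$ intersected with this preimage. Only finitely many pairs $(i,m)$ are relevant, since $i \in \{1,\ldots,d-1\}$ and $m$ is bounded in terms of the filtration type $\nu$, so the union is still a closed analytic subspace. The weakly admissible locus is then the open complement and inherits a canonical structure of a Berkovich analytic space. The main obstacle will be verifying the underlying Berkovich-analytic machinery in detail: that the preimage of the Newton stratum in $(G \times \Gr_{K,\nu})^{\an}$ is itself an analytic subspace (which follows from its description via closed conditions and equalities of valuations of the $c_i$), and that the direct image of a closed analytic subspace under a proper analytic morphism is again closed analytic. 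With these in hand, the quantisation argument above furnishes the desired analytic structure.
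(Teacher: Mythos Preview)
Your proposal is correct and follows essentially the same line as the paper's proof, which simply states that the argument of Theorem~\ref{thmscheme} (via Proposition~\ref{maintheoforrigpt}) goes through because over a Newton stratum the functions $f_i$ are no longer locally constant, but their absolute values $|f_i|$ are. Your quantisation argument---that $v(f_i)$ lands in the finite set $\Lambda_{i,\mu}$ determined by $\mu$---is exactly what makes $|f_i|$ locally constant (a continuous function with discrete image), so your replacement of the strict inequality by a non-strict one is just an alternative packaging of the observation that $S_{i,m}$ is open-and-closed in the preimage of the stratum.
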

\begin{proof}
The proof is almost identical with the proof of Theorem $\ref{thmscheme}$.
Here the functions $f_i$ are not locally constant, but their valuations (or absolute values) are.
\end{proof}
\subsection{The image of the weakly admissible locus}
In this section we determine the image of the weakly admissible locus under the map defined in $(\ref{maptoAmodW})$.
In the case of a regular cocharacter $\nu$ it was shown by Breuil and Schneider that the set of points $a\in A$ such that there exists a weakly admissible filtered $\phi$-module $(D,\Phi,\Fcal^\bullet)$ with $(\Phi^f)^{\rm ss}=a$ is an affinoid domain, see \cite[Proposition 3.2]{BreuilSchneider}.
Here we extend this result to the general case and give a description of this image purely in terms of the adjoint quotient $A/W$. The difference with the description in \cite{BreuilSchneider} is that we do not need to fix an order of the generalized eigenvalues corresponding to the order of their valuations.
We fix a coweight $\nu$ as in $(\ref{cocharnu})$. This coweight determines the jumps of the filtration $\Fcal^\bullet$ on $\Gr_{K,\nu}$. 
After passing to $\bar\Q_p$ the filtration is given by
\[\Fcal^\bullet=\prod_\psi\Fcal_\psi^\bullet\]
where the product runs over all embeddings $\psi:K\hookrightarrow \bar\Q_p$.\\
We write $\{x_{\psi,1}>x_{\psi,2}>\dots >x_{\psi,r}\}$ for the jumps of the filtration $\Fcal^\bullet_\psi$, i.e. 
\[\gr_i\Fcal_\psi^\bullet\neq 0\Leftrightarrow i\in\{x_{\psi,1},\dots,x_{\psi,r}\}.\]
Further denote by $n_{\psi,i}$ the rank of $\Fcal_\psi^{x_{\psi,i}}$. For $i\in\{0,\dots,d\}$ define 
\begin{equation}\label{defli}
l_i=\sum_{\psi}\tfrac{1}{ef}\left(\sum _{j=1}^{r-1}(x_{\psi,j}-x_{\psi,j+1})\max(0,n_{\psi,j}+i-d)+x_{\psi,r}d\right).
\end{equation}
\begin{lem}\label{technischeslem} 
Let $s_1,s_2\in\R$ and $j_1\geq j_2\in\mathbb{N}$. Let $i\in\{1,\dots,d-j_1\}$ such that
$s_1\geq l_i$ and $s_1+j_1s_2\geq l_{i+j_1}$. Then $s_1+j_2s_2\geq l_{i+j_2}$.
\end{lem}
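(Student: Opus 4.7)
The plan is to reduce the inequality to a convexity statement for the map $i\mapsto l_i$ on $\{0,1,\dots,d\}$. More precisely, the key claim is that $i\mapsto l_i$ is a convex, non-decreasing, piecewise-linear function of the integer parameter $i$, and the lemma will then follow from the one-dimensional secant inequality.

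First I would establish the convexity. Looking at the definition $(\ref{defli})$, the constant contribution $\sum_\psi x_{\psi,r}d/(ef)$ is irrelevant, so we need only show that for each embedding $\psi:K\hookrightarrow\bar\Q_p$ and each $j\in\{1,\dots,r-1\}$, the summand
\[i\longmapsto \tfrac{1}{ef}(x_{\psi,j}-x_{\psi,j+1})\,\max\bigl(0,\,n_{\psi,j}+i-d\bigr)\]
is a non-negative, convex, non-decreasing function of $i$. The factor $x_{\psi,j}-x_{\psi,j+1}$ is strictly positive since the jumps $x_{\psi,1}>x_{\psi,2}>\dots>x_{\psi,r}$ are strictly decreasing by definition, and $\max(0,n_{\psi,j}+i-d)$ is the positive part of an affine function of $i$, hence convex. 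Summing convex functions with positive coefficients preserves convexity, so $i\mapsto l_i$ is convex.

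Next I would deduce the lemma from convexity. The case $j_1=0$ forces $j_2=0$, and the conclusion reduces to the first hypothesis. Assume therefore $j_1\geq 1$, and write
\[i+j_2=\tfrac{j_1-j_2}{j_1}\cdot i+\tfrac{j_2}{j_1}\cdot(i+j_1),\]
a convex combination with coefficients in $[0,1]$ since $0\leq j_2\leq j_1$. Applying the convexity of $l_\bullet$ at the three integer points $i,i+j_2,i+j_1\in\{0,\dots,d\}$ (here we use $i+j_1\leq d$, which is guaranteed by $i\leq d-j_1$) gives
\[l_{i+j_2}\leq\tfrac{j_1-j_2}{j_1}\,l_i+\tfrac{j_2}{j_1}\,l_{i+j_1}.\]
Combining this with the two hypotheses $l_i\leq s_1$ and $l_{i+j_1}\leq s_1+j_1s_2$ yields
\[l_{i+j_2}\leq \tfrac{j_1-j_2}{j_1}\,s_1+\tfrac{j_2}{j_1}\,(s_1+j_1s_2)=s_1+j_2s_2,\]
which is the desired inequality.

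There is no serious obstacle here; the only thing one must be careful about is verifying positivity of the coefficients $(x_{\psi,j}-x_{\psi,j+1})$, which is a direct consequence of how the $x_{\psi,j}$ were defined as the (strictly ordered) jumps of $\Fcal^\bullet_\psi$. Once convexity is in place, the rest is a one-line computation.
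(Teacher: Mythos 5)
Your proof is correct and takes the same approach as the paper: both exploit the convexity of $i\mapsto l_i$ and compare the affine function $j\mapsto s_1+js_2$ against the convex function $j\mapsto l_{i+j}$ on $[0,j_1]$, concluding that the affine function dominates at intermediate points since it does so at both endpoints. You spell out the verification of convexity from $(\ref{defli})$ (positivity of the coefficients $x_{\psi,j}-x_{\psi,j+1}$, the max being a convex function of $i$), which the paper asserts without comment; this is a welcome bit of added explicitness but not a different route.
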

\begin{proof}
We view $f_1(j)=s_1+js_2$ and $f_2(j)=l_{i+j}$ as functions of $j\in[0,j_1]$. Then $f_1$ is (affine) linear while $f_2$ is picewise linear and convex. As $f_1(0)\geq f_2(0)$ and $f_1(j_1)\geq f_2(j_1)$ by assumption, the claim follows. 
\end{proof}
\begin{defn}\label{defmunu}
For a cocharacter $\nu$ and $i\in\{1,\dots,d\}$ define $l_i$ as in $(\ref{defli})$. Define a rational dominant coweight $\mu(\nu)\in\afrak_{\rm dom}$ by requiring that
\[\tfrac{1}{f}\langle\omega_i,\mu(\nu)\rangle=-l_i\ \text{for all}\ i\in\{1,\dots,d\}.\]
\end{defn}
The following result generalizes \cite[Proposition 3.2]{BreuilSchneider}.
\begin{theo}
Let $\nu$ be a cocharacter as in $(\ref{cocharnu})$ and define $\mu(\nu)$ as in Definition $\ref{defmunu}$.
Then $(\widetilde{\alpha}^{\an})^{-1}(x)^{\rm wa}\neq\emptyset$ if and only if $x\in(A/W)^{\an}_{\leq \mu(\nu)}$.
\end{theo}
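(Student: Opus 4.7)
The plan is to handle the two implications $(\Rightarrow)$ and $(\Leftarrow)$ separately. The forward direction unwinds weak admissibility through the slope decomposition and uses the convexity interpolation of Lemma~\ref{technischeslem}; the reverse direction constructs an explicit weakly admissible triple by a generalization of the Breuil--Schneider construction.

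\emph{Forward direction.} I will take $(D,\Phi,\Fcal^\bullet)\in\widetilde{\alpha}^{-1}(x)^{\rm wa}$ and denote by $\lambda_{(1)},\dots,\lambda_{(d)}$ the eigenvalues of $\Phi^f|_{V_\psi}$ in an algebraic closure of $\rfH(x)$, ordered so that $v_x(\lambda_{(k)})$ is non-decreasing. Since the coordinates $c_i$ of $x$ are (up to sign) the elementary symmetric polynomials in the $\lambda_{(k)}$, one has $v_x(c_i)\ge\sum_{k=1}^i v_x(\lambda_{(k)})$, with equality at $i=d$. Hence it suffices to show $\sum_{k=1}^i v_x(\lambda_{(k)})\ge fl_i$ (with equality at $i=d$). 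The equality $\mu(D)=1$ immediately yields $\sum_{k}v_x(\lambda_{(k)})=fl_d$. For the inequality at a Newton breakpoint $i_k$, I would apply weak admissibility to the slope-decomposition quotient $D/D_{\le\lambda_k}$ of Proposition~\ref{slopedecomp}: by Lemma~\ref{lemseseq} its slope is $\le 1$, which translates, after unwinding the multiplicative conventions, into $\sum_{j>i_k}v_x(\lambda_{(j)})\le f\deg\Fcal^\bullet_{D/D_{\le\lambda_k}}$. A standard linear-algebra bound (applied at each embedding $\psi$) shows that the degree of the induced filtration on a quotient of rank $d-i_k$ is bounded above by the sum of the top $d-i_k$ Hodge slopes, i.e.\ by $l_d-l_{i_k}$. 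Subtracting from the total then gives $\sum_{k=1}^{i_k}v_x(\lambda_{(k)})\ge fl_{i_k}$ at every breakpoint, and Lemma~\ref{technischeslem} (applied to the convex sequence $fl_i$) interpolates the bound to all intermediate $i$.

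\emph{Reverse direction.} Conversely, assuming $x\in(A/W)^{\an}_{\le\mu(\nu)}$, I let $\lambda_1,\dots,\lambda_d$ be the roots of $T^d+c_1T^{d-1}+\dots+c_d$ in an algebraic closure of $\rfH(x)$. The sequence $i\mapsto fl_i$ is convex, since the $i$-th smallest averaged Hodge slope is non-decreasing in $i$, and the hypothesis $v_x(c_i)\ge fl_i$ forces the lower convex hull of $(i,v_x(c_i))$ -- which is by definition the Newton polygon of the polynomial and coincides with the graph $i\mapsto\sum_{k=1}^iv_x(\lambda_{(k)})$ -- to lie on or above the Hodge polygon $i\mapsto fl_i$, with matching endpoints. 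Given this Newton-above-Hodge inequality, I produce a weakly admissible triple: fix any $\phi$-semilinear automorphism $\Phi$ on $\rfH(x)\otimes_{\Q_p}K_0^d$ with the prescribed characteristic polynomial, and construct a filtration $\Fcal^\bullet$ of type $\nu$ in general position with respect to the slope decomposition of $(D,\Phi)$. For regular $\nu$ this is precisely \cite[Proposition~3.2]{BreuilSchneider}; for general $\nu$ one adapts their construction, the Newton-above-Hodge inequality guaranteeing that no $\Phi$-stable subobject of the form $D_{\le\lambda_k}$ (together with its generic perturbations) violates weak admissibility. Finally, Corollary~\ref{waextstab} allows one to descend the data to a suitable finite extension of $\Q_p$ inside $\rfH(x)$.

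\emph{Main obstacle.} The delicate step will be the reverse direction in the case of a non-regular cocharacter $\nu$. Breuil--Schneider's argument for regular $\nu$ exploits the full flexibility of a complete flag, whereas for general $\nu$ the partial flag has fewer free parameters, and one must verify that a generic filtration of type $\nu$ still avoids weak admissibility obstructions from \emph{all} $\Phi$-stable subspaces, not merely those arising from the slope decomposition. This will require an openness/genericity argument, making essential use of Theorem~\ref{waopen} to identify the locus of generic filtrations as a non-empty Zariski-open subset of $\Gr_{K,\nu}$ over the chosen Frobenius.
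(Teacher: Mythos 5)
Your forward direction is correct but takes a different route than the paper. The paper proves the stronger claim that for \emph{every} subset $I\subset\{1,\dots,d\}$ of size $i$ one has $\tfrac{1}{f}\sum_{j\in I}v_p(\lambda_j)\ge l_i$: it chooses a $\Phi$-stable subobject $D'$ whose generalized eigenspaces contain those of $I$ with larger multiplicities $m'_j\ge m_j$, applies weak admissibility to $D'$, and then iteratively invokes Lemma~\ref{technischeslem} to lower each $m'_j$ to $m_j$. You instead bound the Newton polygon at its own breakpoints by looking at the slope-decomposition quotients $D/D_{\le\lambda_k}$ and use the convexity of $i\mapsto l_i$ (again Lemma~\ref{technischeslem}) to interpolate. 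Both are fine; the paper's version yields a slightly stronger intermediate statement, while yours is arguably more conceptual (Newton above Hodge).

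The reverse direction has a genuine gap, and you have put your finger on the wrong source of difficulty. The crucial degree of freedom in the problem is that the fiber $\widetilde\alpha^{-1}(c)$ does not fix $\Phi$ but only its image in $A/W$, so you are free to choose the most convenient representative. The paper takes $g=(\id,\dots,\id,A)$ with $A$ a direct sum of Jordan blocks with distinct eigenvalues, precisely so that the resulting $(D,\Phi)$ has only \emph{finitely many} $\Phi$-stable subobjects. Then the set of filtrations achieving the minimal intersection dimensions $\max(0,n_{\psi,j}+i-d)$ for each of the finitely many $D'$ is a finite intersection of nonempty Zariski-opens in the irreducible variety $\Gr_{K,\nu}$, hence nonempty, and any point there is weakly admissible. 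If you instead ``fix any $\Phi$ with the prescribed characteristic polynomial'' (e.g.\ a semisimple one with a repeated eigenvalue), there are continuum-many $\Phi$-stable subspaces, the genericity argument no longer reduces to avoiding finitely many closed conditions, and you are essentially reproving the Fontaine--Rapoport existence theorem. Your proposed fix via Theorem~\ref{waopen} does not help: openness of $X^{\rm wa}$ says nothing about nonemptiness, which is the entire content here. Finally, the obstacle is not regular versus non-regular $\nu$ --- once $\Phi$ is chosen in Jordan form the argument is uniform in $\nu$; the subtlety is entirely in the choice of $\Phi$.
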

\begin{proof}
Let $c=(c_1,\dots,c_d)\in(A/W)^{\an}_{\leq \mu(\nu)}$ and denote by $\lambda_1,\dots,\lambda_t$ the roots of 
\[X^d+c_1X^{d-1}+\dots+c_{d-1}X+c_d\]
with multiplicities $m_i$ in some finite extension $L$ of $\rfH(c)$ containing $K_0$. Let $D=L\otimes_{\Q_p}V_0\cong \prod_{i=1}^d L^d$ and
\[g=(\id,\dots,\id,A)\in \prod_{i=1}^d\GL_d(L)\cong G(L),\]
where $A$ is a matrix consisting of $t$ Jordan blocks of size $m_i$ with diagonal entries $\lambda_i$. 
Now the pair $(D,\Phi)=(D,g(\id\otimes\phi))\in\Isoc(k)_L$ has the property that there are only finitely many $\Phi$-stable subobjects $D'\subset D$.
If $D'\subset D$ is a rank $i$ subobject then
\[t_N(D'):=\tfrac{1}{f^2}v_p(\dete \Phi^f|_{D'})=\tfrac{1}{f}\sum_{j=1}^t m'_jv_p(\lambda_j)\]
for some multiplicities $m'_j$, where we write $v_p$ for the additive valuation.  Write $a=(\lambda_1^{(m_1)},\dots,\lambda_t^{(m_t)})\in A^{\an}$, then
\begin{align*}
t_N(D')&=\tfrac{1}{f}v_p(w\omega_i(a))\hspace{-2cm}&& =-\tfrac{1}{f}\langle w\omega_i,\nu_a\rangle \\
&=-\tfrac{1}{f}\langle w'\omega_i,r(d_{c(a)})\rangle \hspace{-2cm}&& \geq -\tfrac{1}{f}\langle \omega_i,r(d_{c(a)})\rangle \\
&\geq -\tfrac{1}{f}\langle \omega_i,\mu(\nu)\rangle \hspace{-2cm}&&=l_i.
\end{align*} 
for some $w,w'\in W$.
Now for all $\Phi$-stable $D'\subset D$ consider the open subset 
\[U_{D'}\subset \Gr_{K,\nu}\otimes_{\Q_p}K_0\]
of all filtrations $\Fcal^\bullet$ such that $\dim (\Fcal^{x_j}_\psi\cap D'_K)=\max(0,n_{\psi,j}+i-d)$ for all embeddings $\psi$. This is open as the right hand side is the minimal possible dimension of such an intersection. Since $\Gr_{K,\nu}$ is geometrically irreducible we find that the intersection $\bigcap_{D'\subset D} U_{D'}$ is non-empty and hence there exists an $F$-valued point $\Fcal^\bullet$ in this intersection, where $F$ is some extension of $L$. Now we have $(D\otimes_{K_0}F,\Phi\otimes\id,\Fcal^\bullet)\in\Fil\Isoc(k)_F^K$ and this object is weakly admissible since for all $\Phi$-stable $D'\subset D$ we have
\[\deg(D')=l_i-t_N(D')\leq 0\]
where $i$ is the rank of the subobject $D'$ (and here we write the degree additively). Further, by the definition of $g$, we find that $g$ maps to $c$ under the map $\widetilde{\alpha}$.  

Conversely assume that $c\in(A/W)^{\an}$ such that $\emptyset\neq\widetilde{\alpha}^{-1}(c)$. Let $(D,\Phi,\Fcal^\bullet)$ be an $F$-valued point of this fiber for some field $F$ containing $K_0$. Then $D$ decomposes into $D_1\times\dots\times D_f$ and we denote by $\mu_1,\dots,\mu_t$ the distinct generalized eigenvalues of $\Phi^f|_{D_1}$ and by $d_i$ their multiplicities. We write $(\lambda_1,\dots,\lambda_d)=(\mu_1^{(d_1)},\dots,\mu_t^{(d_t)})$.
Then $c=c(\lambda_1,\dots\lambda_d)$ and we claim that
\[\tfrac{1}{f}\sum_{j\in I}v_p(\lambda_j)\geq l_i\]
for all $I\subset \{1,\dots,d\}$. This claim clearly implies $c\in (A/W)^{\an}_{\leq \mu(\nu)}$.\\
Let $I\subset \{1,\dots,d\}$ and write $(\lambda_i)_{i\in I}=({\lambda'_1}^{(m_1)},\dots,{\lambda'_t}^{(m_t)})$, where we assume that the $\lambda'_i$ are pairwise distinct. Then $\sum_{j=1}^tm_j=i=\sharp I$. 
There exists a subobject $D'\subset D$ and $m'_j\geq m_j$ such that
\[(\Phi^f|_{D'\cap D_1})^{\rm ss}={\rm diag}({\lambda'_1}^{(m'_1)},\dots{\lambda'_t}^{(m'_t)})\]
and hence 
\[\sum_{j\geq 1} m'_j\tfrac{1}{f}v_p(\lambda'_j)\geq \sum_{j\geq 1} j\tfrac{1}{ef}\dim \gr_i D'_K\geq l_{i_1}.\]
where $i_1$ denotes the rank of $D'$.  
Apply Lemma $\ref{technischeslem}$ with $j_1=m'_1$, $j_2=m_1$ and 
\begin{align*}
& s_1=\sum_{j\geq 2}m'_j\tfrac{1}{f}v_p(\lambda'_j)\\
& s_2=\tfrac{1}{f}v_p(\lambda'_1),
\end{align*}
(the condition on the indices is obviously satisfied). This yields
\[m_1\tfrac{1}{f}v_p(\lambda'_1)+\sum_{j\geq 2}m'_j\tfrac{1}{f}v_p(\lambda'_j)\geq l_{i_1-m'_1+m_1}.\]
As similar argument shows 
\[m_1\tfrac{1}{f}v_p(\lambda'_1)+\sum_{j\geq 3}m'_j\tfrac{1}{f}v_p(\lambda'_j)\geq l_{i'_1-m'_1+m_1},\]
where $i'_1$ is the rank of the direct sum of all generalized eigenspaces of $\Phi^f$ on $D'$ with eigenvalues different from $\lambda_2$.  Applying Lemma $\ref{technischeslem}$ again, we conclude
\[m_1\tfrac{1}{f}v_p(\lambda'_1)+m_2\tfrac{1}{f}v_p(\lambda'_2)+\sum_{j\geq 3}m'_j\tfrac{1}{f}v_p(\lambda'_j)\geq l_{i_1-m'_1+m_1-m'_2+m_2}.\]
Repeating this argument several times we finally end up with the claim.
\end{proof}
We end by giving two examples of closed Newton strata in the adjoint quotient.
\begin{expl}
Let $K=\Q_p$ and $d=3$. We fix the cocharacter $\nu$ as in Example $\ref{example1}$ and Example $\ref{example2}$, i.e.
\[\dim \Fcal^i=\begin{cases} 3 & i\leq 0 \\ 2& i=1 \\ 1&i=2 \\ 0&i\geq 3. \end{cases}\]
One easily checks that $l_1=0$, $l_2=1$ and $l_3=3$, i.e.
\[\mu(\nu):t\longmapsto {\rm diag}(1,t^{-1}, t^{-2}).\]
The image of the weakly admissible locus in the adjoint quotient is given by
\[(A/W)^{\an}_{\leq\mu(\nu)}=\left \{c=(c_1,c_2,c_3)\in \Abb^2\times\Gbb_m\left| \begin{array}{*{20}c} v_c(c_1)\geq 0, \\ v_c(c_2)\geq 1, \\ v_c(c_3)=3. \end{array}\right.\right\}\]
If $a=(a_1,a_2,a_3)\in A$ with $v_a(a_1)\leq v_a(a_2)\leq v_a(a_3)$, then \cite[Proposition 3.2]{BreuilSchneider} says that there exists a weakly admissible filtered $\phi$-module $(D,\Phi,\Fcal^\bullet)$ with filtration of type $\nu$ such that $\Phi^{\rm ss}=a$ if and only if 
\[\begin{cases} 0\leq v_a(a_1) \\ 1\leq v_a(a_1)+v_a(a_2) \\ 3=v_a(a_1)+v_a(a_2)+v_a(a_3).\end{cases}\]
This is clearly equivalent to our condition in the adjoint quotient. This result also explains Example $\ref{example1}$ and Example $\ref{example2}$.
\end{expl}
\begin{expl}
Again we let $K=\Q_p$ and $d=3$. Fix a cocharacter $\nu$ such that
\[\dim \Fcal^i=\begin{cases}3&i\leq 3 \\ 2& i=1 \\ 0 & i\geq 2.\end{cases}\]
One easily checks that $l_1=0$, $l_2=1$ and $l_3=2$, and the image of the weakly admissible locus is
\[(A/W)^{\an}_{\leq\mu(\nu)}=\left \{c=(c_1,c_2,c_3)\in \Abb^2\times\Gbb_m\left| \begin{array}{*{20}c} v_c(c_1)\geq 0, \\ v_c(c_2)\geq 1, \\ v_c(c_3)=2. \end{array}\right.\right\}\]
\end{expl}

\medskip
\noindent
\address{Mathematisches Institut der Universit\"at Bonn\\ Endenicher Allee 60, 53115 Bonn, Germany}\\
\email{hellmann@math.uni-bonn.de}

\end{document}